\theoremstyle{plain}
\newtheorem{thm}{Theorem}
  \newtheorem{lem}[thm]{Lemma}
  \newtheorem{cor}[thm]{Corollary}
  \newtheorem{prop}[thm]{Proposition}
\theoremstyle{remark}
  \newtheorem{rem}[thm]{Remark}
  \newtheorem{exa}[thm]{Example}
  \newtheorem*{notation}{Notation}
\theoremstyle{plain}
\newcommand{\ii}{\mathbf{i}}
\newcommand{\tone}{\psi}
\newcommand{\ttwo}{\varphi}
\begin{document}
\advance\textwidth 3cm
  \title[Polarimetric measurements of nematic liquid crystals]{On one dimensional inverse problems arising from polarimetric measurements of nematic liquid crystals}

\author{Yves Capdeboscq}
\address{Mathematical Institute\\
 24-29 St Giles, OXFORD OX1 3LB\\
 United Kingdom}
\author{Basang Tsering-Xiao}
\address{ Mathematical Department of Tibet University\\
 36 Jiangsu Road, Lhasa, Tibet \\
P.R. China  850000}
\begin{abstract}
We revisit the problem of determining dielectric parameters in layered nematic liquid crystals from polarimetric measurements originally 
introduced by Lionheart \& Newton. 
After a detailed analysis of the model, of the scales involved, and of natural obstacles to the reconstruction of more than one dielectric 
parameters,  we produce two simple one-dimensional inverse problems which can be studied without any expertise in liquid crystals. 
We then confirm that very little can be recovered about the internal configuration of smooth dielectric parameters from these measurements, 
and give a uniqueness result for one of the two problem, when the unknown parameter satisfies a monotonicity property. 
In that case, the available data can be expressed in terms of Laplace and Hankel transforms.
\end{abstract}

\maketitle

\section{Introduction}

\begin{figure}\label{fig:setup}
\centerline{\includegraphics[width=0.7\columnwidth]{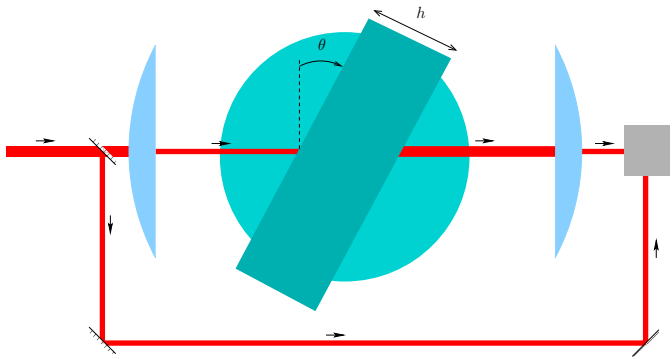}}
\caption{Sketch of the polarimetric measurement experiment.
A polarized laser beam is split in two, the main part is focused, sent through a thin slab containing
the liquid crystal. The slab is placed in an isotropic, homogeneous medium whose index is close to the 
average index of the slab. The transmitted beam is refocused, and compared to the initial beam by an 
analyser. The planar cross-section sketched above is referred to as the ($x_1$--$x_3$) plane in the text.}
\end{figure}

Consider the experiment sketched in Figure~\ref{fig:setup}.
A polarized focused laser beam is split in two beams.
One beam propagates in an homogeneous medium (in light blue),
and travels through a slab containing the liquid crystal (LC) cell.
The transmitted beam exits on the opposite side of the slab, and enters an analyser,
where it is compared to the other part of the split beam, which did not pass through
the slab. The analyser delivers four real numbers, called the Stokes parameters. The LC cell is
placed on a cylindrical mount, to allow variations of the incident angle of the laser in the slab.
The Stokes parameters, appropriately normalized, are collected
for a large range of incident  angles. The purpose of this article is to study the dependence
of the data on the dielectric tensor of the liquid crystal, and discuss its
possible reconstruction.

The liquid crystal contained in the slab has a layered structure: the dielectric permittivity is
constant and anisotropic in each layer. The magnetic permeability is a constant number. The liquid
crystal is coated with thin films (polyimide, indium tin oxide...) but these thin films will not be
taken into account in our study. This experiment was performed at Hewlett-Packard Laboratories,
Bristol and was analysed by Lionheart \& Newton~\cite{LIONHEART-NEWTON-07} using a singular 
value decomposition approach.

To model this problem we adopt the approximation introduced in Berreman~\cite{BERREMAN-72}, following
Lionheart \& Newton~\cite{LIONHEART-NEWTON-07}. The incident polarized laser beam in the isotropic 
medium in front of the liquid crystal is modelled by monochromatic plane waves incident obliquely in 
the ($x_1$--$x_3$) plane (the plane in which the experiment is sketched), with direction
$\overrightarrow{s}=\left(\sin(\theta),0,\cos(\theta)\right)$.
The corresponding electric field has two components,
\[
\mathcal{E}^{i}_{1}=E_{1}\exp\left(\ii\omega
\left(\frac{n_{0}}{c}\overrightarrow{s}\cdot\overrightarrow{x}\right)\right)
\]
which is parallel to $\overrightarrow{s}^{1}_{\perp}=(\cos(\theta),0,-\sin(\theta))$, and
\[
\mathcal{E}^{i}_{2}=E_{2}\exp\left(\ii\omega
\left(\frac{n_{0}}{c}\overrightarrow{s}\cdot\overrightarrow{x}\right)\right)
\]
along $\overrightarrow{s}^{2}_{\perp}=(0,1,0)$. To take into account possible phase differences,
the amplitudes of $E_{1}$ and $E_{2}$ are complex numbers. When the incident field is polarized linearly,
$E_1$ and $E_2$ are of the form $A \cos(\phi)$ and $A \sin(\phi)$, with  $A\in \mathbb{C}$ and $\phi\in[0,2\pi)$.

When the incident field has a circular polarization, $E_2= \pm i E_1$. The central frequency of
the laser is denoted by $\omega$, $c$ is the speed of light, and $n_0$ is the index of the isotropic
medium around the slab on the wave path.
We assume that within the slab, the propagating electric and magnetic fields $\mathcal{E}$
and $\mathcal{H}$ stay alike plane waves along the $x_1$ direction, and do not depend on $x_2$.
Thus $\mathcal{E}$ and $\mathcal{H}$ are modelled by the following ansatz
\begin{equation}
 \mathcal{E}=E(x_{3})\exp \lbrace
\ii\omega(\frac{n_0}{c}s_{1}x_{1})\rbrace,\quad
\mathcal{H}=H(x_{3})\exp \lbrace
\ii\omega(\frac{n_0}{c}s_{1}x_{1})\rbrace, \label{eq:ansatz}
\end{equation}
and satisfy the time-harmonic Maxwell's equations
\begin{equation}
\nabla\land\mathcal{E}  =\ii\omega \mu_0 \mathcal{H},\quad
\nabla\land\mathcal{H}  =-\ii\omega \varepsilon_0 \varepsilon (x_3)\mathcal{E}.\label{eq:max2}
\end{equation}
Where $\varepsilon$ is the relative dielectric tensor, with constant entries outside the slab and
varying as a function of $x_3$ within the LC cell, and $\mu_0$ and $\varepsilon_0$ are
universal constants. See Lionheart \& Newton~\cite{LIONHEART-NEWTON-07}, Tsering-Xiao~\cite{BASANG-THESIS} and references therein
for more details and discussions on this model, and Lavrentovich~\cite{LAVRENTOVICH-12} for further insight on optical 
measurements of liquid crystals.
Substituting \eqref{eq:ansatz} into \eqref{eq:max2}, Maxwell's system becomes
a $4\times4$ system of ordinary differential equations in $x_3$ where
the unknown is the vector $X=(\mathcal{E}_1, \mathcal{H}_2,\mathcal{E}_2,-\mathcal{H}_1)^{T}$
which satisfies
\begin{equation}
\frac{dX}{dx_3}=\ii\frac{\omega}{c}M(x_3)X(x_3).
\label{eq:originBerreman}
\end{equation}
The  Berreman vector $X$ is the tangential part of the electromagnetic fields and represents the optical field.
The Berreman matrix $M$ depends on the dielectric tensor $\varepsilon$.
In the outer isotropic medium,  the Berreman matrix has a block diagonal structure, corresponding to the
two possible polarizations of the electric field, along $\overrightarrow{s}^{1}_{\perp}$ and $\overrightarrow{s}^{2}_{\perp}$.
 The Berreman matrix has
four eigenvectors $(V_i)_{i=1,..,4}$.  The vectors $V_1$ and $V_3$ correspond to the incoming electric and magnetic fields,
whereas the vectors $V_2$ and $V_4$ correspond  to the  outgoing electric and magnetic fields, see Section~\ref{sec:form0}.
The direct problem is formulated in Lionheart \& Newton~\cite{LIONHEART-NEWTON-07} as a transmission problem.
The incident  field and the transmitted field are  represented by the input
vector $X^i = \mathcal{E}^{i}_{1} V_1 + \mathcal{E}^{i}_{2} V_3$ and the output vector
$X^t=\mathcal{E}^{t}_{1} V_1 + \mathcal{E}^{t}_{2} V_3$.
The reflected field $X^r$ is a combination of the outgoing eigenvectors $V_2$ and $V_4$.
If $h$ is the thickness of the slab, and
$P(h)$ is the transmission matrix coming
from \eqref{eq:originBerreman}, the transmission problem takes the form
\begin{equation}\label{eq:trans-LN}
P(h)(X_i +X_r)=X_t.
\end{equation}
The Stokes parameters (the measured data) are
\begin{equation}\label{eq:Stokes-LN}
S (E^{i}_1,E^{i}_2,\theta,\varepsilon)
=\left(\left|\mathcal{E}^{t}_{1}\right|^2+\left|\mathcal{E}^{t}_{2}\right|^2,
\left|\mathcal{E}^{t}_{1}\right|^2-\left|\mathcal{E}^{t}_{2}\right|^2,2 \mathcal{E}^{t}_{1}\overline{\mathcal{E}^{t}_{2}}\right).
\end{equation}
The questions investigated numerically in Lionheart \& Newton~\cite{LIONHEART-NEWTON-07} are
\begin{enumerate}
\item Can the dielectric tensor through the LC cell be deduced from the data?
\item How is the solution affected by the range of incident angles and input polarisations used?
\item Given the limited accuracy of the polarimeter  how much information can be deduced from the data?
\end{enumerate}
The goal of this paper is to address these questions analytically. In section~\ref{sec:formulation}
we reformulate the problem in an equivalent, non-dimensional form, taking into account the scale of the various parameters
of the problem.  In section~\ref{sec:form0} we detail the derivation of the Berreman model \eqref{eq:originBerreman},
 given in a non-dimensional form by \eqref{eq:trans-2}.

We consider two types of LC cells, orthorhombic and nematic uniaxial. In section~\ref{sec:orthlc} we investigate the case of an
orthorhombic medium with principle axes aligned with the coordinate axes, that is, we assume that $\varepsilon$ is a diagonal
matrix-valued function.  This first model was also discussed  in Lionheart \& Newton~\cite{LIONHEART-NEWTON-07}, with a different approach. 
Orthorhombic crystals are one of the typical anisotropic materials. Within the class of liquid crystals, orthorhombic symmetry was 
considered as a convenient theoretical possibility in early developments, see e.g. Freiser~\cite{FREISER-70}. Since then, 
it has been observed for specific bi-axial liquid crystals by Hegmann et al.~\cite{HEGMANN-01,KASNACHEEV-07}, but it is deemed to be 
very rare see e.g. Karahaliou et al.~\cite{KARAHALIOU-VANAKARAS-PHOTINOS-09}. In section~\ref{sec:uniLC}, we turn to the case of a nematic 
uniaxial LC cell. This is a very common  model for LC cells. 
In that case, $\varepsilon$ can be expressed in terms of a director profile, that is
\begin{equation}
\varepsilon=\varepsilon_{\perp}I_d+(\varepsilon_{\parallel}-\varepsilon_{\perp})\textbf{n}\otimes
\textbf{n}\, , \label{eq:uniaxialDielectric}
\end{equation}
where, for each $x_3$, $\textbf{n}(x_3)\in\mathbb{S}^2$ is a unit vector, the so-called  director vector of the LC cell.
The two constant eigenvalues of $\varepsilon$ are usually written $\varepsilon_{\perp}$
and $\varepsilon_{\parallel}$. The numbers $\sqrt{\varepsilon_{\perp}}$ and $\sqrt{\varepsilon_{\parallel}}$
are known as the refractive indices of ordinary and extraordinary
waves. The subscripts $\parallel$ and $\perp$ refer to field
directions respectively parallel and perpendicular to the director
vector $\textbf{n}$. Formula \eqref{eq:uniaxialDielectric}  implies that the electric field energy propagates
separately along the ordinary and extraordinary direction. The derivation of this model and further background
can be found in Virga~\cite{VIRGA-94}, de Gennes \& Prost~\cite{DEGENNES-PROST-93} and Chandrasekhar~\cite{CHANDRASEKHAR-92}.

For both models, we highlight intrinsic obstacles to the determination of the dielectric parameters: non-uniqueness is endemic,
 regardless of practical limitations such as the range and precision of the measurements involved. In section~\ref{sec:scales},
we express the scales of the various coefficients involved in the experiment described in Lionheart \& Newton~\cite{LIONHEART-NEWTON-07} in terms of a
non-dimensional parameter.

We conclude this section by producing three one dimensional inverse problems coming from both the orthorhombic LC cell and nematic uniaxial LC
cell model, Problem~A given by \eqref{eq:problemeA}, Problem~B given by \eqref{eq:defDB} and Problem~C given by \eqref{eq:defDC}.
Problem~A and Problem~B concern the reconstruction of one coefficient from the amplitude of the transmission data.
This coefficient would be  $\varepsilon_{22}$ for the orthorhombic LC cell example, and one component of director profile
$\textbf{n}$ {\it{a priori}} assumed to lie  in the ($x_1$--$x_3$) incident plane.
Problem~C presents the independent phase data available from the nematic uniaxial LC cell model, which is just one
numerical value.

Section~\ref{sec:ProblemA} is devoted to Problem~A. Under a smoothness assumption of the coefficient to be reconstructed,
we describe what can be extracted from the data corresponding to moderate angles of incidence. We then explain how 
Problem~B connects to Problem~A, as a first order approximation, when the full range of angles of incidence are considered.

Section~\ref{sec:ProblemB} is devoted to Problem~B. For this model problem, we can describe the class of equivalent parameters,
and give a uniqueness result with a monotonicity assumption. We show that this problem can be reformulated in terms of
the classical Laplace and Hankel transforms.

Section~\ref{sec:proofss} contains the proofs of various technical intermediate results given in the previous sections.
We summarize our findings and discuss possible extensions in section~\ref{sec:concl}. We use some facts concerning systems of
ordinary differential equations in this paper, they are given in appendix for the reader's convenience.

\section{\label{sec:formulation} Modelling and scaling assumptions}

\subsection{\label{sec:form0}An alternative equivalent formulation}

In this section, we prove the following proposition.

\begin{prop}\label{pro:equiv-form}
The transmission problem \eqref{eq:originBerreman}--\eqref{eq:trans-LN} can be written in a non-dimensional form as follows. Given $I_1$ and $I_2$ in $\mathbb{C}$,
find $T_1,T_2,R_1$ and $R_2$ in $\mathbb{C}$  such that there exists a continuous solution of
\begin{eqnarray}
\frac{d Z}{d t} &=&\ii\frac{1}{\eta}B(t)Z(t), \quad t \in [0 , 1] ,\nonumber \\
Z(0)&=&\left[I_1-R_1,I_1+R_1,I_2-R_2,I_2+R_2\right]^{T}, \label{eq:trans-2} \\
Z(1)&=&\left[T_1,T_1,T_2,T_2\right]^{T},\nonumber
\end{eqnarray}
where $B$ is a matrix-valued piecewise continuous function of $t\in [0,1]$ given by
\begin{equation}
B=\frac{1}{\tilde{\varepsilon}_{33}}\times  \left[\begin{array}{cccc}
- \sin(\theta) \tilde{\varepsilon}_{13}  & \frac{\displaystyle \lambda^2 + \tilde{\varepsilon}_{33}-1}{\lambda} &0&
-\sin(\theta)\lambda^{-1}\tilde{\varepsilon}_{{23}}  \\
 \lambda \left(\tilde{\varepsilon}_{{11}}\tilde{\varepsilon}_{{33}}-{\tilde{\varepsilon}_{{13}}}^{2}\right)&
- \sin(\theta) \tilde{\varepsilon}_{{13}}
& 0 &  \tilde{\varepsilon}_{{12}}\tilde{\varepsilon}_{{33}}-\tilde{\varepsilon}_{{13}}\tilde{\varepsilon}_{{23}} \\
\tilde{\varepsilon}_{{12}}\tilde{\varepsilon}_{{33}}-\tilde{\varepsilon}_{{13}}\tilde{\varepsilon}_{{23}}
& - \sin(\theta){\lambda}^{-1} \tilde{\varepsilon}_{{23}}  &0&
\frac{\displaystyle(\lambda^2 +\tilde{\varepsilon}_{{22}}-1)\tilde{\varepsilon}_{{33}}-{\tilde{\varepsilon}_{{23}}}^{2}}{\lambda}
\\ 0&0&
\lambda \tilde{\varepsilon}_{33}&0\end{array}\right], \label{eq:def-B}
\end{equation}
with
\begin{equation}\label{eq:eta-theta}
t= \frac{x_3}{h},\quad \tilde\varepsilon = \frac{1}{(n_0)^2}\varepsilon,\quad  \frac{\omega n_0 h}{c} = \eta^{-1},\mbox{ and } \cos(\theta)= \lambda.
\end{equation}
The length $h$ is the thickness of the LC slab, $\theta$ is the angle of incidence, and $n_0$ is the index of the homogeneous isotropic medium surrounding the slab.

\medskip{}

The Stokes parameter data given by \eqref{eq:Stokes-LN} is equivalent to
\begin{equation}\label{eq:data-stokes}
\tilde{\mathcal{S}}(I_1,I_2,\theta)=(T_1\exp\left(\ii f(I_1,I_2,\theta)\right),T_2\exp\left(\ii f(I_1,I_2,\theta)\right)),
\end{equation}
where the real function $f$ is unknown, and may depend on $I_1,I_2$ and $\theta$.
\end{prop}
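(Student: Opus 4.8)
The plan is to derive the Berreman system \eqref{eq:originBerreman} from Maxwell's equations \eqref{eq:max2} under the ansatz \eqref{eq:ansatz}, then to non-dimensionalise and perform a constant linear change of unknowns that brings the system to \eqref{eq:trans-2}, and finally to analyse the Stokes functionals. First I would substitute \eqref{eq:ansatz} into \eqref{eq:max2}. Since the fields depend on $x_1$ only through the common factor $\exp(\ii\omega n_0 s_1 x_1/c)$ and are independent of $x_2$, every $\partial_{x_1}$ becomes multiplication by $\ii\omega n_0 s_1/c$ while $\partial_{x_2}=0$, so the two curl equations reduce to relations in $x_3$ alone. The third components of both curl equations carry no $x_3$-derivative: one gives $\mathcal{H}_3$ directly, and the other expresses $\mathcal{E}_3$ algebraically, using here that $\tilde\varepsilon_{33}\neq 0$. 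Substituting these back into the remaining four equations closes a first-order system for $X=(\mathcal{E}_1,\mathcal{H}_2,\mathcal{E}_2,-\mathcal{H}_1)^T$ of the form \eqref{eq:originBerreman}, and carrying out the elimination explicitly produces the Berreman matrix $M$ in terms of the entries of $\varepsilon$ and $s_1=\sin\theta$.

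Next I would introduce the rescalings \eqref{eq:eta-theta}: writing $t=x_3/h$ turns $d/dx_3$ into $h^{-1}d/dt$, factoring $n_0^2$ out via $\tilde\varepsilon=\varepsilon/n_0^2$ rescales $M$, and collecting the constants into $\eta^{-1}=\omega n_0 h/c$ yields the prefactor $\ii\eta^{-1}$. A constant change of variables $Z=CX$, chosen to absorb the factor $\lambda=\cos\theta$ and to decouple the block structure of the exterior-medium matrix, then brings the system to $dZ/dt=\ii\eta^{-1}BZ$ with $B$ as in \eqref{eq:def-B}. To fix the boundary data I would work in the isotropic exterior, where $B$ is piecewise constant and its eigenvectors $V_1,\dots,V_4$ split into incoming ($V_1,V_3$) and outgoing ($V_2,V_4$) pairs. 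Writing the field at $t=0$ as $X_i+X_r$ and at $t=1$ as the purely transmitted $X_t$ and applying $C$, the incoming and reflected amplitudes combine into $I_j\mp R_j$ in the two components of each block, while the presence of a single outgoing transmitted wave forces the repeated pattern $Z(1)=[T_1,T_1,T_2,T_2]^T$. This reproduces the boundary conditions of \eqref{eq:trans-2}, and the transmission relation \eqref{eq:trans-LN} becomes the requirement that one continuous solution (continuity being the transmission condition for the piecewise continuous $B$) connects $Z(0)$ to $Z(1)$.

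For the Stokes data I would observe that the transmitted amplitudes $\mathcal{E}^t_1,\mathcal{E}^t_2$ are, up to a fixed normalising constant, the quantities $T_1,T_2$ appearing in $Z(1)$. The three Stokes quantities in \eqref{eq:Stokes-LN} are quadratic in the field and manifestly invariant under a common phase rotation $(\mathcal{E}^t_1,\mathcal{E}^t_2)\mapsto(e^{\ii f}\mathcal{E}^t_1,e^{\ii f}\mathcal{E}^t_2)$; conversely, they determine exactly $|T_1|$, $|T_2|$ and the relative phase $\arg T_1-\arg T_2$, which is precisely the information carried by the pair $(T_1 e^{\ii f},T_2 e^{\ii f})$ with $f$ left free. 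As the undetermined common phase may depend on the experimental parameters, $f$ is an unknown real function of $I_1,I_2,\theta$, which gives \eqref{eq:data-stokes}.

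I expect the main obstacle to be the bookkeeping in the explicit elimination of $\mathcal{E}_3,\mathcal{H}_3$ together with the choice of the change of variables $C$: one must verify that the constants $\lambda$ and $n_0$ distribute exactly as in \eqref{eq:def-B}, and that the single matrix $C$ simultaneously produces the $I_j\pm R_j$ pattern at $t=0$ and the repeated $T_j$ pattern at $t=1$. By contrast, the Stokes step is conceptually transparent, resting only on the common-phase invariance of the quadratic Stokes functionals.
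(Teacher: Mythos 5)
Your proposal is correct and follows essentially the same route as the paper: non-dimensionalise via \eqref{eq:eta-theta}, apply a constant change of unknowns adapted to the exterior medium's incoming/outgoing eigenstructure (your single matrix $C$ is exactly the paper's composite $R_{\frac{\pi}{4}}V^{-1}$, i.e.\ the passage to the eigenbasis of $M_1$ followed by the block $\pi/4$ rotation), read off the boundary patterns $I_j\mp R_j$ and $[T_1,T_1,T_2,T_2]^T$ from that splitting, and conclude by the common-phase invariance of the quadratic Stokes functionals. The only difference is that you re-derive the Berreman system \eqref{eq:originBerreman} from Maxwell's equations, which the paper takes as given from Berreman's and Lionheart--Newton's work; this is harmless and consistent with the stated model.
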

Problem~\eqref{eq:trans-2} has only partial conditions on the solution
at the start point and  partial conditions at the end point. It is
therefore not a standard initial eigenvalue problem for system of ordinary differential equations
\eqref{eq:trans-1}. It is nevertheless well-posed, as the following proposition shows, proved in \ref{sec:appendixA} for completeness.
\begin{prop}\label{pro:well-posed-ini}
Given $I_1$ and $I_2$, there exist
a unique pair of reflection parameters $R_1$ and $R_2$, a unique pair of transmission parameters $T_1$ and $T_2$, and
a unique $Z$ solution of \eqref{eq:trans-2}.
\end{prop}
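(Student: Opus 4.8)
The plan is to recast the boundary value problem \eqref{eq:trans-2} as a finite-dimensional linear system and to prove its invertibility by means of a conservation law. Since the differential equation is linear with piecewise continuous coefficients, for any prescribed $Z(0)\in\mathbb{C}^4$ there is a unique continuous solution on $[0,1]$, namely $Z(t)=P(t)Z(0)$, where $P$ is the fundamental matrix determined by $P(0)=\mathrm{Id}$. The two endpoint conditions in \eqref{eq:trans-2} therefore reduce to the single algebraic constraint $P(1)Z(0)=Z(1)$. Writing $Z(0)=(I_1,I_1,I_2,I_2)^{T}+R_1 v_1+R_2 v_2$ and $Z(1)=T_1 w_1+T_2 w_2$ with $v_1=(-1,1,0,0)^{T}$, $v_2=(0,0,-1,1)^{T}$, $w_1=(1,1,0,0)^{T}$, $w_2=(0,0,1,1)^{T}$, this becomes a system of four complex equations in the four complex unknowns $R_1,R_2,T_1,T_2$. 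It therefore suffices to show that the associated homogeneous system, obtained by setting $I_1=I_2=0$, has only the trivial solution, since injectivity of a square system forces invertibility.

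To this end I would introduce the Hermitian swap matrix $J$ acting by $Jx=(x_2,x_1,x_4,x_3)^{T}$, together with the real quadratic form $Q(Z)=Z^{*}JZ=2\,\mathrm{Re}(\overline{Z_1}Z_2+\overline{Z_3}Z_4)$, which is, up to a constant, the $x_3$ component of the time-averaged Poynting flux expressed in the Berreman variables. The crucial step is the identity $\frac{d}{dt}Q(Z(t))=\frac{\ii}{\eta}\,Z^{*}(JB-B^{*}J)Z$, obtained by differentiating $Z^{*}JZ$ and using $Z'=\frac{\ii}{\eta}BZ$ with $\eta$ real. Thus $Q$ is conserved as soon as $JB=B^{*}J$. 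For a lossless cell the dielectric tensor is real and symmetric, so $B$ is real, and a direct inspection of \eqref{eq:def-B} shows that $JB$ is symmetric: the swap exchanges rows $1\leftrightarrow 2$ and $3\leftrightarrow 4$, and the three equalities $b_{11}=b_{22}$, $b_{24}=b_{31}$ and $b_{14}=b_{32}$ required for symmetry hold entry by entry (each pair reduces to $-\sin(\theta)\tilde\varepsilon_{13}$, $\tilde\varepsilon_{12}\tilde\varepsilon_{33}-\tilde\varepsilon_{13}\tilde\varepsilon_{23}$ and $-\sin(\theta)\lambda^{-1}\tilde\varepsilon_{23}$ respectively). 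Hence $JB=(JB)^{T}=B^{T}J=B^{*}J$, and $Q(Z(1))=Q(Z(0))$.

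It then remains to evaluate $Q$ at both endpoints of the homogeneous problem. With $I_1=I_2=0$ one has $Z(0)=R_1 v_1+R_2 v_2$ and $Z(1)=T_1 w_1+T_2 w_2$, so a direct computation gives $Q(Z(0))=-2(|R_1|^2+|R_2|^2)\le 0$ and $Q(Z(1))=2(|T_1|^2+|T_2|^2)\ge 0$. Conservation forces both quantities to vanish, whence $R_1=R_2=T_1=T_2=0$. The homogeneous system is therefore trivial, the square system $P(1)Z(0)=Z(1)$ is invertible, and for every datum $(I_1,I_2)$ there is a unique choice of reflection and transmission parameters, and in turn a unique solution $Z$.

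The reduction to a linear algebraic system and the endpoint evaluations of $Q$ are routine. The main obstacle, and the only place where the structure of the model genuinely enters, is establishing the conservation law: identifying the correct flux matrix $J$ and verifying the symmetry $JB=B^{*}J$ directly from \eqref{eq:def-B}. The opposite sign pattern of $Q$ on purely reflected states at $t=0$ versus purely transmitted states at $t=1$ is precisely what converts this conservation into a uniqueness statement.
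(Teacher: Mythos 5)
Your proposal is correct. It shares with the paper's proof the same two structural ingredients: the reduction, via the fundamental matrix, of the transmission problem to a square linear system in the four unknowns $(R_1,R_2,T_1,T_2)$, and the conservation law built on the swap matrix (your $J$ is the paper's $D$, and your identity $JB=B^{*}J$ is exactly the paper's $DB=B^{T}D$, which the paper asserts but does not verify entry by entry as you do). Where you genuinely diverge is in how the conservation law is converted into invertibility. The paper applies the polarized form $\bar{Z}_1^{T}DZ_2$ to the two solutions emanating from the reflection directions $(-1,1,0,0)^{T}$ and $(0,0,-1,1)^{T}$ (its vectors $U$ and $V$ are precisely your $P(t)v_1$ and $P(t)v_2$), and combines the resulting identities with the Cauchy--Schwarz inequality to prove the quantitative bound $|\det A_1|\geq 2$ for the coefficient matrix of the algebraic system. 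You instead apply the quadratic form $Q(Z)=Z^{*}JZ$ to a single solution of the homogeneous problem and exploit the sign dichotomy: $Q\leq 0$ on purely reflected states at $t=0$, $Q\geq 0$ on purely transmitted states at $t=1$, so conservation forces both to vanish. Your route is the classical energy-flux uniqueness argument; it is shorter and more conceptual, and it only needs the diagonal case $Z_1=Z_2$ of the paper's conservation lemma. The paper's route costs an explicit determinant computation but yields a uniform lower bound on $|\det A_1|$, i.e. quantitative invertibility, which says something about the conditioning of the map $(I_1,I_2)\mapsto (R_1,R_2,T_1,T_2)$ that a pure injectivity argument does not. Finally, both arguments rely on $B$ being real (lossless, symmetric dielectric tensor): you state this hypothesis explicitly, whereas the paper uses it tacitly when it writes $B^{T}$ in place of $B^{*}$ while differentiating $\bar{Z}_1^{T}DZ_2$.
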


\begin{proof}[Proof of proposition~\ref{pro:equiv-form}]
We rescale the space variable $x_3$ by $t=x_3/h$. Problem~\ref{eq:originBerreman} becomes
\begin{equation}
\frac{dX}{dt}= \frac{\ii}{\eta}\tilde{M}(t)X(t),
\label{eq:Berreman2}
\end{equation}
and  $\tilde M$ is given by
\begin{equation}
 \tilde{M}=\left[\begin{array}{cccc}
-\frac{\tilde\varepsilon_{{13}}\sin(\theta) }{\tilde\varepsilon_{{33}}}
 & \frac{\tilde\varepsilon_{{33}}-\sin(\theta)^{2}}{n_0 \varepsilon_{0}c\tilde\varepsilon_{{33}}}
 & -\frac{\tilde\varepsilon_{{23}}\sin(\theta)}{\tilde\varepsilon_{{33}}}
 & 0
\\
n_0\frac{\varepsilon_{0}c\left(\tilde\varepsilon_{{11}}\tilde\varepsilon_{{33}}-{\tilde\varepsilon_{{13}}}^{2}\right)}{\tilde\varepsilon_{{33}}}
& -\frac{\tilde\varepsilon_{{13}} \sin(\theta)}{\tilde\varepsilon_{{33}}}
& n_0\frac{\varepsilon_{0}c\left(\tilde\varepsilon_{{12}}\tilde\varepsilon_{{33}}-\tilde\varepsilon_{{13}}\tilde\varepsilon_{{23}}\right)}{\tilde\varepsilon_{{33}}}
& 0
\\
0 & 0 & 0 & \frac{1}{n_0 \varepsilon_{0}c}\\
n_0 \frac{\varepsilon_{0}c\left(\tilde\varepsilon_{{12}}\tilde\varepsilon_{{33}}-\tilde\varepsilon_{{13}}\tilde\varepsilon_{{23}}\right)}{\tilde\varepsilon_{{33}}}
& -
\frac{\tilde\varepsilon_{{23}}\sin(\theta)}{\tilde\varepsilon_{{33}}} &
n_0
\frac{\varepsilon_{0}c\left(\tilde\varepsilon_{{22}}\tilde\varepsilon_{{33}}-{\tilde\varepsilon_{{23}}}^{2}-\sin(\theta)^{2}\tilde\varepsilon_{{33}}\right)}{\tilde\varepsilon_{{33}}}
& 0
\end{array}\right]. \label{eq:tildeM}
\end{equation}
In the outer isotropic medium, the matrix $\tilde M$ simplifies to
$$
M_{1}=\left[\begin{array}{cccc}
0 & \frac{1}{n_0\varepsilon_{0}c}\lambda & 0 & 0\\
n_{0} \varepsilon_{0}c & 0 & 0 & 0\\
0 & 0 & 0 & \frac{1}{n_0 \varepsilon_{0}c}\\
0 & 0 & n_{0}\varepsilon_{0}c\lambda &
0\end{array}\right],\label{eq:outerM}
$$
which has a block diagonal structure, corresponding to the
independence of two possible polarisations of the electric field.
The matrix $M_1$ has
four eigenvectors, two corresponding to the incoming electric and
magnetic fields, and two corresponding to the outgoing electric and
magnetic fields. Ordering them in agreement with the block
structure, these are
\[
V=\left[\begin{array}{cccc}
{\displaystyle \lambda} & {\displaystyle -\lambda} & {\displaystyle 0} & {\displaystyle 0}\\
{\displaystyle \varepsilon_{0}cn_{0}} & {\displaystyle \varepsilon_{0}cn_{0}} & {\displaystyle 0} & {\displaystyle 0}\\
{\displaystyle 0} & {\displaystyle 0} & {\displaystyle 1} & 1\\
{\displaystyle 0} & {\displaystyle 0} & {\displaystyle
\varepsilon_{0}cn_{0}\lambda} & {\displaystyle
-\varepsilon_{0}cn_{0}\lambda}\end{array}\right]
\]
 corresponding to the eigenvalues $\lambda,-\lambda,\lambda,-\lambda$. We note that
\[
X^{i}=
\mathcal{E}^{I}_{1} \,V
\left[
1,
0,
0,
0\right]^{T}
+
\mathcal{E}^{I}_{2}\, V
\left[
0,
0,
1,
0\right]^{T}.
\]
The transmission problem takes a simpler form if the matrix $\tilde M$ is written in the eigenbasis of $M_1$, that is,
\begin{equation}
\hat{M}=V^{-1}\tilde{M}V.\label{eq:defmtilde}
\end{equation}
The transmission problem \eqref{eq:originBerreman}--\eqref{eq:trans-LN} then becomes
\begin{quote}
{\emph{Given an initial condition
$I=\left[I_{1},0,I_{2},0\right]^{T}$,
find $T=\left[T_{1},0,T_{2},0\right]^{T}$
and
$R=\left[0,R_{1},0,R_{2}\right]^{T}$ such that the solution of
\begin{equation}\label{eq:trans-1}
\frac{dY}{dt}=\frac{\ii}{\eta}\hat{M}(t)Y(t), \quad t \in [0 , 1]
, \end{equation}
satisfies\[ {}Y(0)=I+R, \quad Y(1)=T.\]}}
\end{quote}
{The stokes parameter are then given by}
$$
S(I_1,I_2,\theta)
=\left(\left|{T}_{1}\right|^2+\left|{T}_{2}\right|^2,
\left|{T}_{1}\right|^2-\left|{T}_{2}\right|^2,2 {T}_{1}\overline{{T}_{2}}\right).
$$
A computation shows that the available data is equivalent to the knowledge of the vector $\mathcal{S}$ given by \eqref{eq:data-stokes}.
Finally, the structure of the matrix $\hat{M}$, is simpler after a block $-\pi/4$  a rotation.
We define
\[
B=R_{\frac{\pi}{4}}( \hat{M} )R_{-\frac{\pi}{4}},
\]
where \[{}R_{\frac{\pi}{4}}=\frac{1}{\sqrt2}
\left[\begin{array}{cccc}
{1}&{-1}&{0}&{0}\\
{1}&{1}&{0}&{0}\\
{0}&{0}&{1}&{-1}\\
{0}&{0}&{1}&{1}
\end{array}\right],\]
and the entries of $B$ are given by \eqref{eq:def-B}. After these simplifications, problem \eqref{eq:trans-1} transforms into problem \eqref{eq:trans-2}.
\end{proof}

\subsection{Orthorhombic LC cell model}\label{sec:orthlc}
The following proposition summarizes our findings concerning the orthorhombic model.
\begin{prop}\label{pro:nonu-orth}
If $\varepsilon$ is a diagonal matrix with three independent piecewise continuous functions $(\varepsilon_{ii})_{i=1,2,3}$
as diagonal entries, not all three can be determined from the Stokes parameters: $\varepsilon_{11}$ and $\varepsilon_{33}$
cannot be determined independently from each other. On the other hand, the transmission coefficient $T_1$
is independent of $\varepsilon_{22}$, whereas $T_2$ is uniquely determined by the system
\begin{eqnarray}
\eta\lambda \frac{d z_{3}}{d t} & =& \ii (\lambda^2 +\tilde{\varepsilon}_{22}-1) z_{4} \mbox{ in }  [0,1], \nonumber\\
 \eta \frac{d z_{4}}{d t} & =& \ii \lambda  z_{3}  \mbox{ in } [0,1], \label{eq:probleme2-0}\\
z_{3}(0) & =& 1-R_{2},\quad z_{4}(0)  =1+R_{2},\quad z_{3}(1) =z_{4}(1)=T_{2}. \nonumber
\end{eqnarray}
If $\varepsilon_{11}$ and $\varepsilon_{33}$  are unknown, the available data to determine $\varepsilon_{22}$ is
$|T_2|$, where $|\cdot|$ denotes the complex modulus.
If the phase of $T_1$ is known, and this is the case when both $\varepsilon_{11}$ and $\varepsilon_{33}$
are known, the available data is $T_2$.
\end{prop}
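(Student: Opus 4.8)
The plan is to specialise the non-dimensional system of Proposition~\ref{pro:equiv-form} to a diagonal tensor and to read off the consequences of the block structure that appears.

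\emph{Decoupling.} I would first set $\tilde\varepsilon_{12}=\tilde\varepsilon_{13}=\tilde\varepsilon_{23}=0$ in the matrix $B$ of \eqref{eq:def-B}. Each entry coupling the pair $(z_1,z_2)$ to the pair $(z_3,z_4)$ carries a factor $\tilde\varepsilon_{12}$, $\tilde\varepsilon_{13}$ or $\tilde\varepsilon_{23}$, so $B$ becomes block diagonal: a first $2\times2$ block acting on $(z_1,z_2)$ with entries $a=(\lambda^2+\tilde\varepsilon_{33}-1)/(\lambda\tilde\varepsilon_{33})$ and $b=\lambda\tilde\varepsilon_{11}$, and a second block acting on $(z_3,z_4)$ with entries $(\lambda^2+\tilde\varepsilon_{22}-1)/\lambda$ and $\lambda$, the factor $\tilde\varepsilon_{33}$ cancelling in the latter. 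The boundary data in \eqref{eq:trans-2} split in the same way, $(z_1,z_2)$ carrying $(I_1,R_1,T_1)$ and $(z_3,z_4)$ carrying $(I_2,R_2,T_2)$. Two assertions of the proposition are then immediate: the first block, and hence $T_1$, involves only $\varepsilon_{11}$ and $\varepsilon_{33}$, so $T_1$ is independent of $\varepsilon_{22}$; and the second block, normalised by $I_2=1$, coincides with the system \eqref{eq:probleme2-0}, to which Proposition~\ref{pro:well-posed-ini} applies and produces a unique $T_2$.

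\emph{Non-uniqueness of $(\varepsilon_{11},\varepsilon_{33})$.} Eliminating $z_1$ from the first block gives the scalar transverse-magnetic equation $\frac{d}{dt}\big(\tilde\varepsilon_{11}^{-1}\,\frac{dH}{dt}\big)+\eta^{-2}\,\frac{\lambda^2+\tilde\varepsilon_{33}-1}{\tilde\varepsilon_{33}}\,H=0$ for $H=z_2$. I would then pass to the $\lambda$-independent optical depth $\xi(t)=\eta^{-1}\int_0^t\sqrt{\tilde\varepsilon_{11}/\tilde\varepsilon_{33}}\,ds$ and the normalised amplitude $v\propto(\tilde\varepsilon_{11}\tilde\varepsilon_{33})^{-1/4}H$; a Liouville transformation turns the equation into the Schr\"odinger form $-v_{\xi\xi}+W(\xi)v=\lambda^2 v$ on $[0,L]$, with $L=\xi(1)$, in which the incidence data are probed through the single spectral parameter $\lambda^2$. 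The transmission coefficient $T_1(\lambda)$ is then a functional of $W$, of $L$, and of the boundary values picked up by the amplitude rescaling, that is, of strictly fewer than the two free profiles $\tilde\varepsilon_{11}$, $\tilde\varepsilon_{33}$. Fixing these invariants while varying $\tilde\varepsilon_{33}$ and solving a first-order differential relation for a compensating $\tilde\varepsilon_{11}$ yields an explicit family of admissible pairs with identical data, which establishes that $\varepsilon_{11}$ and $\varepsilon_{33}$ cannot be determined independently. I expect this to be the crux: one must check that the boundary conditions transform correctly under the Liouville change of variables and that the compensating family stays positive and piecewise continuous and remains $\lambda$-independent.

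\emph{Available data.} Finally, by \eqref{eq:data-stokes} the measured quantity is $(T_1e^{\ii f},T_2e^{\ii f})$ with a common, unknown phase $f$; equivalently one knows $|T_1|$, $|T_2|$ and the product $T_1\overline{T_2}$. The phase of $T_1$ is accessible exactly when the first block is known, i.e. when $\varepsilon_{11}$ and $\varepsilon_{33}$ are known. If they are unknown, the relative-phase datum $T_1\overline{T_2}$ cannot be converted into the phase of $T_2$, so $|T_2|$ is the only information bearing on $\varepsilon_{22}$; if they are known, dividing $T_1\overline{T_2}$ by the known $T_1$ returns $\overline{T_2}$, so the full complex number $T_2$ is available. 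This matches the last two sentences of the statement.
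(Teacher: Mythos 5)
Your proposal is correct, and on two of its three parts it follows the paper's own proof: the specialisation of \eqref{eq:def-B} to a block-diagonal matrix (with the $\tilde{\varepsilon}_{33}$ cancellation making the $(z_3,z_4)$ block exactly \eqref{eq:probleme2-0}, well posed by Proposition~\ref{pro:well-posed-ini}), and the data discussion, where your observation that \eqref{eq:data-stokes} amounts to knowing $|T_1|$, $|T_2|$ and $T_1\overline{T_2}$ is a slightly more explicit form of the paper's ``common unknown phase'' argument. Where you genuinely differ is the non-uniqueness of the pair $(\varepsilon_{11},\varepsilon_{33})$. The paper keeps the first-order system and merely reparametrizes time by $\Lambda_1\nu(x)=\int_0^x\tilde{\varepsilon}_{11}(s)\,ds$, obtaining \eqref{eq:probleme1-0}, in which $T_1$ visibly depends on the two unknown profiles only through the scalar $\Lambda_1$ and composite coefficients of the rescaled variable; this informal counting argument costs nothing in regularity and applies verbatim to piecewise continuous coefficients. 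You instead eliminate to a scalar equation for $H=z_2$ and apply a Liouville transformation to reach the Schr\"odinger form $-v_{\xi\xi}+Wv=\lambda^2v$, the classical inverse-scattering gauge. This buys a sharper invariance statement --- $T_1(\lambda)$, for every $\lambda$, is a functional of $W$, of $L$, and of the endpoint values of $p=(\tilde{\varepsilon}_{11}\tilde{\varepsilon}_{33})^{-1/2}$ and $p_\xi$ alone, so any two pairs sharing these invariants are indistinguishable --- but it carries two costs you should acknowledge. First, $W$ contains $\tfrac12 p^{-1/2}(p^{-1/2}p_\xi)_\xi$, so the reduction needs roughly two derivatives of $\tilde{\varepsilon}_{11}\tilde{\varepsilon}_{33}$, more than the proposition's piecewise-continuity hypothesis; this is harmless, since exhibiting smooth families of equivalent pairs suffices to prove non-uniqueness, but it should be stated. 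Second, your compensating family does not come from a ``first-order differential relation'': the relation through $W$ is second order in $p$, and the variable $\xi$ itself moves with the unknown pair, so fixing $(W,L)$ and solving for $\tilde{\varepsilon}_{11}$ given $\tilde{\varepsilon}_{33}$ is a coupled second-order problem. The construction is easier run backwards: keep $(W,L)$ and the endpoint data from one admissible pair, perturb the gauge function $p$ in the interior (subject to the single normalization that keeps the slab thickness equal to one), and recover $\tilde{\varepsilon}_{33}=1-W+\tfrac12 p^{-1/2}(p^{-1/2}p_\xi)_\xi$ and $\tilde{\varepsilon}_{11}=1/(p^2\tilde{\varepsilon}_{33})$ by differentiation rather than by solving an ODE, positivity being preserved for small perturbations.
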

\begin{proof}
If we assume that $\varepsilon$ is a diagonal matrix-valued function with $\varepsilon_{ii}>C>0$ for $i=1,2,3$
and $C$ is a positive constant, the Berreman matrix $B$ becomes block diagonal:
\[
B = \left[\begin{array}{cccc}
0 & {\displaystyle \frac{\lambda^2+\tilde{\varepsilon}_{33}-1}{\tilde{\varepsilon}_{33}\lambda} }& 0 & 0\\
\lambda \tilde{\varepsilon}_{11} & 0 & 0 & 0\\
0 & 0 & 0 & {\displaystyle \frac{\lambda^2 + \tilde{\varepsilon}_{22}-1}{\lambda}}\\
0 & 0 & \lambda & 0\end{array}\right].
\]
By inspection we note that the unknowns $(R_1,T_1)$ only depend on $I_1$, and the unknowns $(R_2,T_2)$ only depend on $I_2$: the two transmission
modes are decoupled. By linearity, we can thus set $I_1=I_2=1$. The transmission problem relating $R_2$ and $T_2$ becomes \eqref{eq:probleme2-0}.

To write the transmission problem relating $R_1$ and $T_1$ in a similar form, we write
$\Lambda_1=\int_0^{1} \tilde{\varepsilon}_{11}(s)\,ds$, and introduce the change of
variable  $\Lambda_1 \nu(x)= \int_0^{x}  \tilde{\varepsilon}_{11}(s)\,ds$, and write
the inverse change of variable $\mu=\nu^{-1}$.
The first block of \eqref{eq:trans-2} then becomes
\begin{eqnarray}
\frac{\eta\lambda}{\Lambda_1} \frac{d z_{1}}{d \nu} & =& \ii
 \frac{\lambda^2 +\tilde{\varepsilon}_{33}(\mu(\nu))-1}{\tilde{\varepsilon}_{11}(\mu(\nu))\tilde{\varepsilon}_{33}(\mu(\nu))} z_{2}
 \mbox{ in } [0,1], \nonumber\\
 \frac{\eta}{\Lambda_1} \frac{d z_{2}}{d \nu} & =& \ii \lambda z_{1}  \mbox{ in } [0,1], \label{eq:probleme1-0}\\
z_{1}(0) & =& 1-R_{1},\quad z_{2}(0)  =1+R_{1},\quad z_{1}(1) =z_{2}(1)=T_{1}. \nonumber
\end{eqnarray}
It is clear that problem~\eqref{eq:probleme1-0} contains too many unknown functions and parameters for each of them to be
uniquely determined by the map $\lambda \to T_1$.
Furthermore, formula \eqref{eq:data-stokes} given in proposition~\ref{pro:equiv-form} shows that for each $\lambda$
the pair $(T_1,T_2)$ is only known up to an arbitrary phase  shift. Since $T_1$ and $T_2$ depend on different unknown functions,
in general only $|T_1|$ and $|T_2|$ are available.
\end{proof}
\begin{prop}\label{pro:nonu-orth-2}
The transmission coefficient $T_2$ is unchanged if $t\to\varepsilon_{22}(t)$ is replaced by $t\to\varepsilon_{22}(1-t)$.
\end{prop}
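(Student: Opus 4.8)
The plan is to exploit the symmetry of the decoupled two-by-two system \eqref{eq:probleme2-0} governing the second transmission mode. Write $\varepsilon_{22}^{\flat}(t)=\varepsilon_{22}(1-t)$ for the reflected coefficient and let $(z_3,z_4)$ denote the solution associated with $\varepsilon_{22}$, with reflection and transmission parameters $R_2,T_2$. The goal is to produce, from $(z_3,z_4)$, a solution of the same boundary-value problem but with $\varepsilon_{22}$ replaced by $\varepsilon_{22}^{\flat}$, and to check that the transmission coefficient it yields equals $T_2$. The natural candidate is the time-reversed pair: set $w_3(t)=z_3(1-t)$ and $w_4(t)=z_4(1-t)$, so that $w_i$ satisfies an ODE in which the coefficient is evaluated at $1-t$, i.e.\ it sees exactly $\varepsilon_{22}^{\flat}$.

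First I would substitute $t\mapsto 1-t$ into \eqref{eq:probleme2-0}. Differentiating $w_3(t)=z_3(1-t)$ flips the sign of each derivative, so the system becomes $\eta\lambda\,\tfrac{dw_3}{dt}=-\ii(\lambda^2+\varepsilon_{22}^{\flat}(t)-1)w_4$ and $\eta\,\tfrac{dw_4}{dt}=-\ii\lambda w_3$. The sign can be absorbed by a simple gauge: the substitution $(w_3,w_4)\mapsto(w_3,-w_4)$, or equivalently reversing the roles of $z_3$ and $z_4$, restores the original form of the equations with $\varepsilon_{22}$ replaced by $\varepsilon_{22}^{\flat}$. I would record the resulting boundary values: at $t=0$ we read off $w_3(0)=z_3(1)=T_2$ and $w_4(0)=z_4(1)=T_2$, while at $t=1$ we get $w_3(1)=z_3(0)=1-R_2$ and $w_4(1)=z_4(0)=1+R_2$.

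Next I would reconcile these boundary conditions with the normalization imposed by \eqref{eq:probleme2-0}, where the solution of the flipped problem must satisfy $\tilde z_3(0)=1-\tilde R_2$, $\tilde z_4(0)=1+\tilde R_2$, $\tilde z_3(1)=\tilde z_4(1)=\tilde T_2$. After the sign-fixing gauge, the time-reversed data have the structure $w_3(1)=w_4(1)$ automatically matching the required equal-value terminal condition, and the pair $(w_3(0),w_4(0))$ splits into a symmetric and antisymmetric part exactly as $1\mp\tilde R_2$ does. Rescaling $(w_3,w_4)$ by the common factor needed to normalize the incoming amplitude to $1$ (which is legitimate because the ODE is linear and the boundary structure is homogeneous of degree one in the data), the transmission coefficient of the flipped problem is then the normalized terminal value, and a short computation comparing $w_3(1),w_4(1)$ with $w_3(0),w_4(0)$ shows it coincides with $T_2$. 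The cleanest way to see the invariance is to note that $|T_2|$ and indeed $T_2$ itself can be expressed through the entries of the transfer matrix $P$ of \eqref{eq:probleme2-0}, and time reversal conjugates $P$ by the flip permutation, leaving the relevant entry unchanged.

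The main obstacle is bookkeeping: the boundary-value problem is not an initial-value problem, so one cannot simply invoke uniqueness of an ODE flow. The delicate point is verifying that the time-reversed, sign-corrected solution satisfies \emph{both} the split incoming condition and the equal-value outgoing condition with the correct normalization, rather than some permuted or rescaled version; I expect the bulk of the work to be in tracking how the symmetric/antisymmetric decomposition at the two endpoints transforms under $t\mapsto 1-t$, and in confirming that the gauge restoring the sign of the system does not alter the transmission amplitude. Once the transfer-matrix formulation is in place and Proposition~\ref{pro:well-posed-ini} guarantees a unique normalized solution, the identification $\tilde T_2=T_2$ follows.
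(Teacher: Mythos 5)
Your construction, as written, proves a tautology rather than the proposition. Track the boundary values \emph{after} the sign-fixing gauge $(w_3,w_4)\mapsto(w_3,-w_4)$: at $t=0$ they are $(T_2,-T_2)$, and at $t=1$ they are $(1-R_2,-(1+R_2))$. The terminal equal-value condition you claim holds "automatically" therefore fails ($1-R_2=-(1+R_2)$ would force $1=-1$), and no scalar rescaling can repair it, because the defect is not one of normalization. In the travelling-wave splitting implicit in \eqref{eq:probleme2-0} — $\frac{1}{2}(z_3+z_4)$ is the right-moving amplitude, $\frac{1}{2}(z_4-z_3)$ the left-moving one — your gauged, time-reversed pair has \emph{zero} right-moving component at $t=0$ and a unit left-moving component at $t=1$: it is the scattering solution for the flipped medium illuminated from the side $t=1$. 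So your argument shows that transmission through $\varepsilon_{22}(1-t)$ with incidence from the right equals $T_2$; but that is just a relabeling of the original experiment and holds trivially for every coefficient. The proposition is equivalent to the genuinely nontrivial statement that, for a fixed medium, transmission from the left equals transmission from the right (reciprocity), and this cannot follow from the change of variables $t\mapsto 1-t$ alone: it requires a conserved quantity.

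That conserved quantity is exactly what the paper supplies in \ref{Annex:B}: it passes to the second-order equation $\frac{d}{dx}\bigl(\frac{1}{f}\frac{dw}{dx}\bigr)+g\,w=0$, uses the constancy of the Wronskian $\frac{1}{f}\bigl({\rm w}_1 {\rm w}_2'-{\rm w}_2 {\rm w}_1'\bigr)\equiv 1$, derives the closed formula $t=2/(e_2(1)+\ii e_1(1))$ with $e_i=\frac{1}{f}{\rm w}_i'-\ii\, {\rm w}_i$, and checks that this denominator is invariant under the flip. Your closing transfer-matrix remark is also incorrect as stated: time reversal does \emph{not} conjugate $P$ (where $z(1)=Pz(0)$) by the flip permutation. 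The off-diagonal structure of the system gives $P^{\flat}=\sigma_3 P^{-1}\sigma_3$ with $\sigma_3=\mathrm{diag}(1,-1)$; only after invoking $\det P=1$ (Liouville, since the generator is trace-free — this is the Wronskian identity again) does this become "swap $P_{11}\leftrightarrow P_{22}$, keep $P_{12},P_{21}$", and only with $\det P=1$ does one get $T_2=2/(P_{11}+P_{22}-P_{12}-P_{21})$, which is then manifestly flip-invariant. Conjugation by the permutation matrix would swap the off-diagonal entries as well, and agrees with $P^{\flat}$ only when $P$ is symmetric, which it is not in general. In short, the determinant/Wronskian identity is not a bookkeeping convenience: it is the entire content of the proposition, and it is the step your proposal never establishes.
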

We check this classical reversibility property in \ref{Annex:B}.

\subsection{Uniaxial nematic LC cell model}\label{sec:uniLC}
Let us now consider the nematic uniaxial model \eqref{eq:uniaxialDielectric}. The index of the surrounding homogeneous medium is given by
\begin{equation}\label{eq:defno}
n_0^2 = \sqrt{\varepsilon_{\perp}\varepsilon_{\parallel}}.
\end{equation}
We parametrize the director vector by
\begin{eqnarray}
\textbf{n}=(\cos(\tone)\cos(\ttwo),\cos(\tone)\sin(\ttwo),\sin(\tone)), \nonumber\\
  (\tone,\ttwo) \in (-\frac{\pi}{2},\frac{\pi}{2}]\times[0,\pi).\label{eq:def-n}
\end{eqnarray}
The tilt angle $\tone$ and the azimuthal angle $\ttwo$ parametrize a half-sphere only, since $\varepsilon$ is unchanged
if $\textbf{n}$ is changed into $-\textbf{n}$. Uniaxial configurations are {\it a priori} less complex than the
orthorhombic ones, since two functions instead of three are to be determined. We simplify the problem even further,
and impose that $\ttwo=0$: the director vector $\textbf{n}$ lies in the incident plane.
This is still not sufficient for uniqueness, as shown by the following proposition.
\begin{prop}\label{pro:nonu-nema}
If $\varepsilon$ is the dielectric permittivity matrix of a  nematic uniaxial LC cell modelled by \eqref{eq:uniaxialDielectric},
the Stokes parameter data are not sufficient to determine uniquely the tilt angle $\tone$ and the azimuthal angle $\ttwo$ of the director
vector defined in \eqref{eq:def-n}. When the director vector stays in the incident plane, that is $\ttwo\equiv 0$, the available data are
$$
T(\lambda)F(\lambda)\mbox{ and  } {T(\lambda) }/{F(\lambda)},
$$
where
\begin{equation}\label{eq:K1}
F(\lambda) =  \exp\left( \ii \eta^{-1}\delta \sqrt{1-\lambda^2} \int_0^{1} \frac{\displaystyle \sin(2 \tone (s))}
{\displaystyle 1+\delta  \cos(2 \tone (s)) }ds \right).
\end{equation}
The parameters with $\lambda$, and $\eta$ are defined in \eqref{eq:eta-theta}, whereas $\delta$ is given by
\begin{equation}\label{eq:delta-nbar}
\delta = \frac{\varepsilon_{\perp} -\varepsilon_{\parallel}}{\varepsilon_{\perp}+\varepsilon_{\parallel}}.
\end{equation}
The map  $\lambda\to T(\lambda)$ is defined by 
\begin{eqnarray}
\tilde{\eta}\lambda \frac{d u_{1}}{d t} & =& \ii
\left(\lambda^2+ \frac{\delta}{\sqrt{1-\delta^2}} \left(\cos(2\psi(\mu)) + \frac{\delta}{1+\sqrt{1-\delta^2}}\right)\right) u_2  \mbox{ in } [0,1], \nonumber\\
\tilde{\eta} \frac{d u_{2}}{d t} & =& \ii  \lambda u_{1}
\mbox{ in } [0,1], \label{eq:probleme-tilt-N}\\
u_{1}(0) & =& 1-R(\lambda),\quad u_{2}(0)  =1+R(\lambda), \quad u_{1}(1) = u_{2}(1)=T(\lambda).\nonumber
\end{eqnarray}
where $\mu(\tau(t))=t$ for all $t\in[0,1]$, with
$$
\tau(t) = \frac{1}{\mathcal{N}} \int_0^t \frac{1}{1+ \delta\cos(2 \tone(s) ) } ds, \quad \mathcal{N} = \int_0^1 \frac{1}{1+ \delta\cos(2 \tone(s)) } ds,\quad
\tilde{\eta}=\frac{\eta}{\sqrt{1-\delta^2} \mathcal{N}}.
$$
The  map $t\to\tone(t)$ is not uniquely determined by the available data.
\end{prop}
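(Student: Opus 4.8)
The plan is to exploit the fact that imposing $\ttwo\equiv 0$ makes the Berreman matrix \eqref{eq:def-B} block diagonal, and then to reduce the nontrivial block to the canonical form \eqref{eq:probleme-tilt-N} by a scalar gauge transformation followed by a Liouville-type change of variable. First I would insert the director \eqref{eq:def-n} with $\ttwo=0$, i.e. $\textbf{n}=(\cos(\psi),0,\sin(\psi))$, into \eqref{eq:uniaxialDielectric} and use \eqref{eq:defno}--\eqref{eq:delta-nbar} to write the non-dimensional entries as
\[
\tilde\varepsilon_{11}=\frac{1-\delta\cos(2\psi)}{\sqrt{1-\delta^2}},\quad \tilde\varepsilon_{33}=\frac{1+\delta\cos(2\psi)}{\sqrt{1-\delta^2}},\quad \tilde\varepsilon_{13}=-\frac{\delta\sin(2\psi)}{\sqrt{1-\delta^2}},\quad \tilde\varepsilon_{22}=\sqrt{\tfrac{1+\delta}{1-\delta}},
\]
with $\tilde\varepsilon_{12}=\tilde\varepsilon_{23}=0$. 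Substituting into \eqref{eq:def-B} then shows that $B$ splits into a $(z_1,z_2)$ block and a $(z_3,z_4)$ block. The latter has the constant coefficient $\tilde\varepsilon_{22}$, so $T_2$ is independent of $\psi$ (this is the single constant datum, corresponding to Problem~C); all $\psi$-dependence sits in the $(z_1,z_2)$ block. A useful identity at this stage is $\tilde\varepsilon_{11}\tilde\varepsilon_{33}-\tilde\varepsilon_{13}^2=1$, which simplifies the lower-left entry of that block to $\lambda/\tilde\varepsilon_{33}$.

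Next, the $(z_1,z_2)$ block has the form $\tfrac{\ii}{\eta}(-a\,I+N)$, where $a=\sin(\theta)\tilde\varepsilon_{13}/\tilde\varepsilon_{33}$ is scalar and $N$ is off-diagonal. Since $-a\,I$ is a scalar multiple of the identity it commutes with everything, so I would set $z=\exp\!\left(-\tfrac{\ii}{\eta}\int_0^t a\right)w$. The phase equals $1$ at $t=0$ and $\exp\!\left(-\tfrac{\ii}{\eta}\int_0^1 a\right)$ at $t=1$; using $\sin(\theta)=\sqrt{1-\lambda^2}$ and $\tilde\varepsilon_{13}/\tilde\varepsilon_{33}=-\delta\sin(2\psi)/(1+\delta\cos(2\psi))$ this is exactly $F(\lambda)$ of \eqref{eq:K1}, and the two-point conditions become $w_1(0)=1-R$, $w_2(0)=1+R$, $w_1(1)=w_2(1)=T(\lambda)$ with $T_1=F(\lambda)T(\lambda)$. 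The remaining antidiagonal $w$-system is then put in the stated form by the change of variable $\tau$, which absorbs the factor $1/\tilde\varepsilon_{33}$ (because $(1+\delta\cos(2\psi))/\tilde\varepsilon_{33}=\sqrt{1-\delta^2}$ is constant) and produces $\tilde\eta$ and $\mathcal{N}$; matching the upper-right coefficient to \eqref{eq:probleme-tilt-N} reduces to the scalar identity $1-\sqrt{1-\delta^2}=\delta^2/(1+\sqrt{1-\delta^2})$.

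For the data characterisation I would argue as follows. By \eqref{eq:data-stokes} the Stokes measurements determine $(T_1,T_2)$ only up to a common unimodular factor; since the $(z_3,z_4)$ block is $\psi$-independent with known constant coefficients, $T_2$ serves as a phase reference and $T_1=F(\lambda)T(\lambda)$ is recovered. Reversing the sign of the incidence angle (the experimental mount samples both signs of $\theta$) flips $\sin(\theta)$, hence $a\mapsto-a$ and $F\mapsto 1/F$, while the off-diagonal coefficients of the $w$-system, the reparametrisation $\tau$, and $\tilde\eta$ depend on $\theta$ only through $\lambda=\cos(\theta)$, so $T(\lambda)$ is unchanged. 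This yields the second datum $T(\lambda)/F(\lambda)$, so the available data are exactly $T(\lambda)F(\lambda)$ and $T(\lambda)/F(\lambda)$.

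Finally, for non-uniqueness I would observe that $\psi\mapsto-\psi$ leaves $\tilde\varepsilon_{11}$, $\tilde\varepsilon_{33}$ and $\tilde\varepsilon_{13}^2$ invariant—hence $T(\lambda)$ and $\tau$ are unchanged—while it reverses the sign of $\tilde\varepsilon_{13}$, and therefore of the integrand in \eqref{eq:K1}, so $F\mapsto 1/F$. Thus the two directors $(\cos(\psi),0,\pm\sin(\psi))$ produce the same pair $\{TF,\,T/F\}$ and cannot be distinguished, which proves that $\psi$ is not determined whenever $\psi\not\equiv 0$; choosing such a pair with $\ttwo\equiv0$ also establishes the first assertion that $(\psi,\ttwo)$ are not jointly determined. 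The reversal $\psi(t)\mapsto\psi(1-t)$ gives a further family of indistinguishable profiles, since $F$ is manifestly invariant under $s\mapsto 1-s$ and $T$ is invariant by the reversibility property used in Proposition~\ref{pro:nonu-orth-2}. The step I expect to be the main obstacle is the second one: carrying the Liouville change of variable through \eqref{eq:def-B} and matching the resulting coefficient to the precise form in \eqref{eq:probleme-tilt-N} (the $\delta$-algebra), together with justifying cleanly that the phase ambiguity in \eqref{eq:data-stokes} is resolved by the constant $T_2$-block rather than obstructing the recovery of $T_1$.
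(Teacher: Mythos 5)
Your reduction is correct and is essentially the paper's own: the same block decomposition of \eqref{eq:def-B} under $\varphi\equiv 0$ (with the same explicit entries $\tilde\varepsilon_{11},\tilde\varepsilon_{33},\tilde\varepsilon_{13}$ and the identity $\tilde\varepsilon_{11}\tilde\varepsilon_{33}-\tilde\varepsilon_{13}^2=1$), the same scalar gauge transformation removing the diagonal entries, and the same Liouville reparametrisation $\tau$ with the algebra $1-\sqrt{1-\delta^2}=\delta^2/(1+\sqrt{1-\delta^2})$; your use of the $\psi$-independent $(z_3,z_4)$ block to fix the unknown phase in \eqref{eq:data-stokes}, and of the $\pm\theta$ measurements to obtain both $T(\lambda)F(\lambda)$ and $T(\lambda)/F(\lambda)$, is also exactly the paper's argument.

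The genuine gap is in your non-uniqueness argument. Your primary example, the global sign flip $\psi\mapsto-\psi$, does not produce indistinguishable data. The two values $T F$ and $T/F$ are labelled: the experimenter knows which signed incidence angle produced which measurement, so the data is the function $\theta\mapsto T_1(\theta)$ on the full signed range, and it determines $F^2=(TF)\big/(T/F)$ \emph{with its sign}, hence the signed constant $K=\int_0^1 \sin(2\psi(s))/(1+\delta\cos(2\psi(s)))\,ds$. The paper relies on exactly this: in Problem~C it declares $F^2(\lambda)$ (not the unordered pair $\{F^2,F^{-2}\}$) available and treats $\mathcal{D}_C[2\psi]$ as a signed datum. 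Under $\psi\mapsto-\psi$ one has $T\mapsto T$ but $K\mapsto -K$, i.e.\ the data function is reflected, $T_1(\theta)\mapsto T_1(-\theta)$ (physically, flipping the sign of $\psi$ amounts to reversing the $x_1$ axis, which swaps the roles of $+\theta$ and $-\theta$). This is detectable whenever $K\neq0$, so your claim that $\psi$ is undetermined ``whenever $\psi\not\equiv0$'' is false: for instance $\psi\equiv\pi/4$ and $\psi\equiv-\pi/4$ yield different data. This is precisely why the paper does not use the sign flip. Its example (Figure~\ref{fig:equiv}) repeats a compactly supported pattern five times and flips exactly two of the five copies, in two different arrangements: $T$ is preserved because it depends on $\psi$ only through $\cos 2\psi$ (so it is invariant under arbitrary pointwise sign changes), and the \emph{signed} integral $K$ is also preserved, since in both arrangements it sums three plus and two minus contributions and is insensitive to their order. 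Your fallback family, the reversal $\psi(t)\mapsto\psi(1-t)$, is valid --- it preserves $T$ by the reversibility property of Proposition~\ref{pro:nonu-orth-2} and preserves $K$ by change of variables --- so the proposition's final claim can still be salvaged from your text, but only through that secondary remark; the example you put forward as the main one, and the assertion built on it, fail.
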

\begin{proof}
When  $\ttwo=0$, the Berreman matrix $B$ is block diagonal,
with
\[
(B)_{1..2,1..2} = \left[\begin{array}{cc}
 \displaystyle \frac{\delta \sin(\theta)\sin(2\tone )}{\displaystyle \delta \cos(2\tone ) + 1}
 &\displaystyle \frac{1}{\lambda}\left(1+ \frac{\sqrt{1-\delta^2}(\lambda^2-1)}{\displaystyle \delta \cos(2\tone ) + 1}\right) \\
\displaystyle  \frac{ \sqrt{1-\delta^2} \lambda}{\displaystyle \delta \cos(2\tone ) + 1}
&
\displaystyle \frac{\delta \sin(\theta)\sin(2\tone )}{\displaystyle \delta \cos(2\tone ) + 1}
\end{array}\right],\label{berre-yves1}
\]
and
\[
(B)_{3..4,3..4}=\left[\begin{array}{cc}
0 & {\displaystyle \frac{n_0^2(\lambda^2-1) + \varepsilon_{\perp}}{n_0^2 \lambda}}\\
 \lambda & 0\end{array}\right].\label{berre-yves2}
\]
The second transmission parameter, $T_2$, can be computed explicitly independently of the tilt angle $\tone $.
Thus, using formula~\eqref{eq:data-stokes} we see that the available data to determine $\tone $ is $T_1$, and not just its modulus.
Performing the change of unknown
$$
(u_1(t),u_2(t)) = \exp\left(-\ii\frac{\sin(\theta)}{\eta} \int_0^{t} \frac{\displaystyle \delta \sin(2\tone (s))}
{\displaystyle  \delta \cos(2\tone (s))+1 }ds\right) (z_1(t),z_2(t)),
$$
we see that $T_1$ is given by  $T(\lambda) F(\lambda)$ or $T(\lambda)/F(\lambda)$  with
 $T(\lambda)$ determined by
\begin{eqnarray}
\eta\lambda \frac{d u_{1}}{d t} & =& \ii
\left(1 + (\lambda^2-1)\frac{\displaystyle \sqrt{1-\delta^2}}{1+ \delta\displaystyle\cos(2 \tone  )}
\right) u_2  \mbox{ in } [0,1], \nonumber\\
\eta \frac{d u_{2}}{d t} & =& \ii  \lambda\frac{\displaystyle\sqrt{1-\delta^2}}{1 +\delta \displaystyle  \cos(2 \tone )} u_{1}
\mbox{ in } [0,1], \label{eq:problemetilt-0}\\
u_{1}(0) & =& 1-R(\lambda),\quad u_{2}(0)  =1+R(\lambda), \quad u_{1}(1) = u_{2}(1)=T(\lambda).\nonumber
\end{eqnarray}
Since both positive and negative incident angles are measured, but  $T(\lambda)$ only depends on $\lambda=\cos(\theta)$,
the data $T_1$ is both $T(\lambda) F(\lambda)$ and $T(\lambda)/F(\lambda)$.

We then change $t$ to $\tau$ in \eqref{eq:problemetilt-0}  and obtain \eqref{eq:probleme-tilt-N} (where we named $t$ instead $\tau$ the dummy variable).

The coefficient $T(\lambda)$  depends on $\tone $, through $\cos(2\tone )$ :  positive and  negative tilt angles are not distinguishable.
The phase difference between $T(\lambda)$ and $T(\lambda)F(\lambda)$ or $T(\lambda)/F(\lambda)$ depends on $\tone $ via the number 
$$
K =  \int_0^{1} \frac{\displaystyle \sin(2 \tone (s))}
{\displaystyle 1+\delta  \cos(2 \tone (s)) }ds.
$$
In Figure~\ref{fig:equiv} we show the graph of two tilt angle functions which would have identical transmission parameter
$T(\lambda)$ for all $\lambda$. To construct these examples, we chose to repeat a given compactly supported pattern five times.
In two instances out of five, the pattern is flipped with respect to the $t$ axis. There are ten possibilities: we chose
two different ones arbitrarily. The parameter $T(\lambda)$ is equal for both graphs, as it is invariant
under arbitrary changes of the tilt angle sign.
The constant $K$ sums the contribution of the five patterns, three with a plus sign and two with a minus sign. The way these patterns
are ordered does not change this integral.
This non-uniqueness comes in addition to the one already highlighted in proposition~\ref{pro:nonu-orth-2},
namely that $T(\lambda)$ is unchanged if $t\to\tone(t)$ is replaced by $t\to\tone(1-t)$, see \ref{Annex:B}.
\end{proof}
\begin{figure}
\includegraphics[width=0.9\columnwidth]{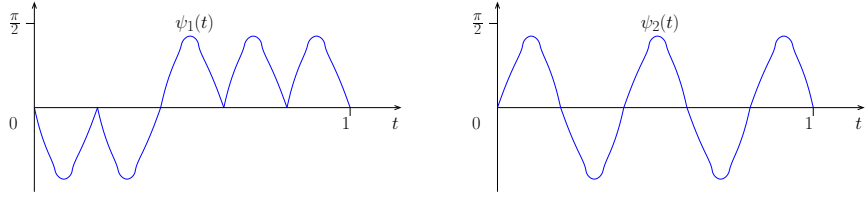}
\caption{\label{fig:equiv} Example of non-uniqueness for nematic crystals. If the azimuthal angle $\ttwo$ is constantly zero,
then $T_1(\psi_1)=T_1(\psi_2)$ for any incident angle $\theta$ .}
\end{figure}

\subsection{Scaling assumptions}\label{sec:scales}

In this section, we discuss the scales of the various quantities involved. 
The numerical values of the physical parameters used in this section are taken from 
Lionheart \& Newton~\cite{LIONHEART-NEWTON-07}.

\textbf{Frequency.} For a He-Ne laser of wavelength $0.633~\mu$m and a slab of thickness
$5~\mu$m, we find $\eta=\eta_{\rm e}\approx 8.4\times10^{-2}$.
In what follows, we will therefore assume that
\begin{equation}\label{eq:def-eta}
\eta \mbox{ is a small dimensionless parameter.}
\end{equation}

\textbf{Dielectric parameters.} In the case of a orthorhombic (diagonal) dielectric tensor, we will assume that
\begin{equation}\label{eq:model-orth}
\tilde{\varepsilon}_{22}  = 1 + \eta^\alpha q , \mbox{ where } \sup_{t\in[0,1]} |q (t)| \leq 1, \mbox{ and }\alpha \mbox{ near } 1.
\end{equation}
A similar assumption for the nematic uniaxial LC cell to give a simple form to \eqref{eq:K1} is
\begin{equation}
\label{eq:model-Uni-F}
\delta= \eta^{\alpha}.
\end{equation}
For problem~\eqref{eq:probleme-tilt-N} to match the orthorhombic  assumption \eqref{eq:model-orth}, we can choose
\begin{equation}\label{eq:model-Uni-T}
\frac{\delta}{\sqrt{1-\delta^2}}\left(1+\frac{\delta}{1+\sqrt{1-\delta^2}}\right)={\tilde\eta}^\alpha.
\end{equation}
For the nematic uniaxial LC cell the values are $|\sqrt{\varepsilon_{\perp}}-\sqrt{\varepsilon_{\parallel}}|\approx 0.15$, $n_0\approx1.52$. The scaling
\eqref{eq:model-Uni-F} yields $\alpha\approx0.94$ whereas \eqref{eq:model-Uni-T} leads to $\alpha\approx0.92$, both values are
indeed close to one.

\textbf{Incident angle.} We write the range of the incident angle as follows:
\begin{equation}\label{eq:lambda-theta}
\lambda \in [\sqrt{\tau \eta^{\alpha}},1], \mbox{ and } \tau>1.
\end{equation}
In the experiment considered, $\theta$ varies between from  $0^\circ$ to about $70^\circ$ :
for larger angles the measurements become unreliable. The extremal value $(\cos(70^\circ))^2$ corresponds to
$\tau= \tau_{\rm e} \approx 1.2$.

\textbf{Measurement error.} We will assume that the measured data is accurate up to errors of order
\begin{equation}\label{eq:error-scale}
\mathcal{O}\left( \frac{\eta^{5+\alpha}}{\lambda^{5}} \right)
\end{equation}
Where for any $x>0$, $|\mathcal{O}(x)|\leq C x$, where $C$ is a constant independent of $x$.
In the experimental case considered, this corresponds to a precision of the order of $10^{-7}$
for a normal incidence, and of the order of $10^{-4}$ for the most slanted incidence. This assumption models the fact that the
measurements become less accurate as slant of the slab increases.

\textbf{Sampling rate.} We suppose that $\lambda$ is measured with a fine sampling rate, e.g. $\eta^2$. Experimentally,
200 incident angles $\theta$ between $-70^\circ$ and $70^\circ$ are used, corresponding to sampling rate $\eta^{1.86}$.

\subsection{Model Problems}
To summarize the discussion of the previous section, we now write down three traceable reconstruction problems pertaining to the orthorhombic
LC cell model and the nematic uniaxial LC cell model.

\begin{quote}
\textbf{Problem~A.} Let $\tau>1$ be a constant, $\alpha$ a parameter close to $1$, and $\eta>0$ a small parameter.
Let $q:[0,1]\to [-1,1]$ be a piecewise continuous function. For every $\lambda\in [\sqrt{\tau \eta^{\alpha}},1]$, let
$T(\lambda)$ and $R(\lambda)$ be the solutions of
\begin{eqnarray}
\eta\lambda \frac{d u_{1}}{d t} & =& \ii \left(\lambda^2 +\eta^\alpha q\right) u_{2} \mbox{ in } [0,1] \nonumber\\
 \eta \frac{d u_{2}}{d t} & =& \ii \,\lambda \, u_{1}  \mbox{ in } [0,1] \label{eq:problemeA}\\
u_{1}(0) & =& 1-R_{2},\quad u_{2}(0)  =1+R_{2},\quad u_{1}(1) =u_{1}(1)=T(\lambda). \nonumber
\end{eqnarray}
What can be determined about $q$  from $\mathcal{D}_A(\lambda)=|T(\lambda)|+ \mathcal{O} \left(\eta^{5+\alpha}\lambda^{-5} \right)$?
\end{quote}
\begin{rem}
We note that the lower bound on $\lambda$, coming from experimental considerations, has a natural analytic interpretation.
When $\lambda^2\gg \eta^\alpha q$, Problem~\eqref{eq:problemeA} corresponds to a wave propagation problem, slightly perturbed by $q$.
We are therefore in an 'optical' regime: the transmitted wave is very similar to the incident wave. When
$\lambda^2$ and $\eta^\alpha q$ are of the same order: the parameter $q$ is no longer a perturbation but a leading order term.
Problem~\eqref{eq:problemeA} becomes diffusive, and one can therefore expect the polarimetric measurements to become unreliable.
\end{rem}

Problem~A is directly inspired by proposition~\ref{pro:nonu-orth} for the orthorhombic LC cell model.
We argued that when the dielectric tensor is diagonal,
with principle axes aligned with the coordinate axes, we cannot hope to reconstruct all three diagonal coefficients.
In particular, the entries $\varepsilon_{11}$ and $\varepsilon_{33}$ cannot be determined independently. According to
proposition~\ref{pro:nonu-orth} the second diagonal entry $\varepsilon_{22}$ determines uniquely $|T_2|$,  independently 
of $\varepsilon_{11}$ and $\varepsilon_{33}$. The scaling assumption were discussed in section~\ref{sec:scales}.

Problem~A is also relevant for the  in-plane nematic uniaxial LC cell model, that is when $\mathbf{n}=(\cos(\tone),0,\sin(\tone))$. In
that case $q=\cos(2\psi)$.
More precisely, proposition~\ref{pro:nonu-nema} shows that $|T(\lambda)|=|T(\lambda)F(\lambda)|$ is measurable, and determined by
Problem~A using the scaling assumption discussed in section~\ref{sec:scales} with $\tilde\eta$ instead of $\eta$ and
$$
q= \left(\cos(2\psi(\mu)) + \frac{\delta}{1+\sqrt{1-\delta^2}}\right)\left(1+\frac{\delta}{1+\sqrt{1-\delta^2}}\right)^{-1},
$$
with $\delta$ and $\tilde\eta$ related by \eqref{eq:model-Uni-T}, and $\mu$ given by proposition~\ref{pro:nonu-nema}. At first order,
$\mu$ is the identity and $\delta\approx \eta^\alpha$. Problem~\eqref{eq:model-Uni-T} is therefore a close variant of
Problem~\eqref{eq:problemeA}.

We already know from of the various non-uniqueness examples presented in section~\ref{sec:orthlc} and \ref{sec:uniLC} that
little can be determined about the interior values of $q$ from Problem~A. We discuss it further, assuming $q$ is smooth in
section~\ref{sec:ProblemA}.
We will see in section~\ref{sec:ProblemB} that we can give a more precise answer to the related problem
\begin{quote}
\textbf{Problem B.} Let $\tau>1,\alpha>0$ be constants, $\eta>0$ a small dimensionless parameter, and $q$ a piecewise continuous function such that $|q|<1$.
For every $\lambda\in [\sqrt{\tau \eta^{\alpha}},1]$, let  $\mathcal{D}_B[q]$ be given by
\begin{eqnarray}
  \mathcal{D}_B[q]: &[\sqrt{\tau \eta^{\alpha}},1] &\to \mathbb{R} \nonumber\\
  &\lambda &\to\int_0^1 \sqrt{1+\frac{\eta^{\alpha}}{\lambda^2} q(s)} ds.\label{eq:defDB}
\end{eqnarray}
What are sufficient conditions so that $\mathcal{D}_B[q_1]=\mathcal{D}_B[q_2]$ implies $q_1=q_2$?
\end{quote}

Finally, let us consider the phase information available from the nematic uniaxial LC cell data.
\begin{quote}
\textbf{Problem C.} Let  $\mathcal{D}_C[\psi]$ be given by 
\begin{eqnarray}
  \mathcal{D}_C[\psi] = \int_0^1 \frac{\sin \psi(s)}{1+\delta \cos \psi(s)} ds +\mathcal{O}(\delta^2),\label{eq:defDC}
\end{eqnarray}
where $\delta$ is a small parameter. Assuming that $\psi:[0,1]\to[-\pi,\pi]$ changes sign once, at $s^*\in(0,1)$
and $|\psi|$ is known, find possible values for $s^*$.
\end{quote}
As both $T(\lambda)F(\lambda)$ and $T(\lambda)/F(\lambda)$ are measured, $F^2(\lambda)$ is available.
This data is measured precisely for close to normal incident angles, that is, $\eta^{-1}\delta\sqrt{1-\lambda^2}$ small,
we can extract from \eqref{eq:K1} the constant depending on $\psi$, which is $\mathcal{D}_C[2\psi]$.
This problem is naturally very simple to solve: we highlight it here to point out how this information can be extracted from the
data. On the other hand, not much more can be obtained from this problem, since the data in this case is just one value.

Define
$$
G(t)=\int_0^t \frac{\sin|\psi(s)|}{1+\delta\cos|\psi(s)|} ds.
$$
Since $0\leq|\psi(s)|\leq \pi$, G is non decreasing, and strictly increasing if $\psi$ does not equal $0$ or $\pi$ on a set of positive measure.
There are two possibilities, depending on whether $\psi$ is positive and then negative, or negative and then positive.
$$
G(s_0) = \frac{1}{2}G(1)+ \frac{1}{2}(\mathcal{D}_C[\psi] + \mathcal{O}(\delta^2))
\mbox{ or }
G(s_1)=  \frac{1}{2} G(1) -  \frac{1}{2}\mathcal{D}_C[\psi]+\mathcal{O}(\delta^2).
$$
If $\delta$ is small enough so that $-G(1)<\mathcal{D}_C[\psi] + \mathcal{O}(\delta^2) <  G(1)$  both $s_1$ and $s_0$ always exists,
are unique if G is strictly increasing.
\section{\label{sec:ProblemA}On Problem~A under a smoothness assumption.}

mIn this section, we show that if $q$ smooth, namely $C^4([0,1])$,
moderate angles of incidence provide information about the
endpoint values of $q$ only. The internal configuration of $q$
is not decidable as it is stable under suitable re-arrangements.
We then explain how Problem~A leads to Problem~B.

Our strategy is to find an   explicit approximate formula for
\begin{equation}\label{eq:defD2}
\mathcal{D}_A^\prime = \frac{4}{|T|^2} -2.
\end{equation}

\begin{notation}
In this section, given $x>0$ and $y>0$, $\mathcal{O}(x^y)$ means $|\mathcal{O}(x^y) x^{-y}|\leq K$, where $K$ is a constant
depending on $\|q\|_{C^{4}([0,1])}$, $\tau$ given by \eqref{eq:lambda-theta} and  $y$ only.
\end{notation}
\begin{prop}\label{pro:approx-wkb-A}
Assume that $q\in C^4([0,1])$. There exist $\eta_0>0$ such that for all $0<\eta\leq\eta_0$ and $0<\alpha<2$,
Problem~A  given by \eqref{eq:problemeA} is equivalent to the reconstruction of $q$ from
\begin{equation}\label{eq:D2}
 \mathcal{D}_A^{\prime} = \left(\frac{\eta}{\lambda}\frac{ d v_1}{d t} (1)\right)^2 + \left(\frac{\eta}{\lambda}\frac{ d v_2}{d t} (1)\right)^2 +
 \left(v_1(1)\right)^2 + \left(v_2(1)\right)^2
+ \mathcal{O}\left( \frac{\eta^{5+\alpha}}{\lambda^{5}} \right),
\end{equation}
where $v_1$ and $v_2$ are given by
\begin{eqnarray*}
 v_1(t) &=&\sqrt{\frac{C(\lambda,0)}{C(\lambda,t)}}\cos\left(\frac{\lambda}{\eta} \int_0^t C(\lambda,s) ds\right)+ \frac{\eta}{2\lambda}\frac{dC}{dt}(\lambda,0)\frac{1}{C(\lambda,0)}v_2(t),\\
 v_2(t) &=& \sqrt{\frac{1}{C(\lambda,0)C(\lambda,t)}}\sin \left(\frac{\lambda}{\eta} \int_0^t C (\lambda,s) ds\right),
\end{eqnarray*}
with
\begin{equation}\label{eq:def-ABC}
 A= \left(1+ \frac{\eta^\alpha}{\lambda^2}q\right)^{-1/4}, \quad B =- \frac{1}{4} A^{3} \frac{d^2 A}{d t^2},\mbox{ and } C = (A (1+\frac{\eta^2}{\lambda^2} B))^{-2}.
\end{equation}
We have
$$
\eta_0 > \max \{\eta  \,:\, 1+\eta^{2-\alpha} B>0 \mbox{ for all } t\in[0,1]\}.
$$
\end{prop}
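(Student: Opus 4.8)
The plan is to exploit the linear structure of \eqref{eq:problemeA} to reduce the data $\mathcal{D}_A'$ to an exact algebraic identity, and then to feed into that identity a Liouville--Green (WKB) approximation accurate enough that its error falls below the measurement threshold.

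First I would eliminate $u_1$ from \eqref{eq:problemeA}. Since $\eta u_2' = \ii\lambda u_1$, the second unknown obeys the scalar Helmholtz-type equation $\eta^2 u_2'' + (\lambda^2 + \eta^\alpha q)u_2 = 0$, while $u_1 = -\ii(\eta/\lambda)u_2'$. The four transmission conditions then collapse to a normalization at the left endpoint, $u_2(0) - \ii(\eta/\lambda)u_2'(0) = 2$, an outgoing (radiation) condition at the right endpoint, $(\eta/\lambda)u_2'(1) = \ii\, u_2(1)$, and the identification $T = u_2(1)$; existence and uniqueness follow from Proposition~\ref{pro:well-posed-ini}. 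Introducing the fundamental system $(\phi_1,\phi_2)$ with $\phi_1(0)=1,\phi_1'(0)=0$ and $\phi_2(0)=0,\phi_2'(0)=\lambda/\eta$, and writing $P_j=\phi_j(1)$, $Q_j=(\eta/\lambda)\phi_j'(1)$, I would solve the resulting linear system for the coefficients of $u_2$ in this basis. The key simplification is that the Wronskian $\phi_1\phi_2'-\phi_2\phi_1'$ is constant (the equation has no first-order term), equal to $\lambda/\eta$, so that $P_1Q_2-P_2Q_1=1$. A short computation then gives $T = 2\big/\big[(P_1+Q_2)+\ii(Q_1-P_2)\big]$ and hence the exact identity
\[
\frac{4}{|T|^2}-2 = P_1^2+P_2^2+Q_1^2+Q_2^2,
\]
which is precisely \eqref{eq:D2} written with the exact solutions in place of $v_1,v_2$. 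Since $\mathcal{D}_A'=4/|T|^2-2$ is a smooth function of $|T|$, bounded away from its singularities in the regime considered, the measurement error $\mathcal{O}(\eta^{5+\alpha}\lambda^{-5})$ on $|T|$ propagates to an error of the same order on $\mathcal{D}_A'$, which accounts for the error term displayed in \eqref{eq:D2}.

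Next I would replace $\phi_1,\phi_2$ by the explicit functions $v_1,v_2$ of the statement. With $A$, $B$, $C$ as in \eqref{eq:def-ABC}, the amplitude $C^{-1/2}$ is chosen so that, on substituting $v_j$ into the scalar equation, the coefficient of the term in phase quadrature vanishes identically, while the correction $B=-\tfrac14 A^3 A''$ is designed so that the leading $\mathcal{O}(\eta^2)$ part of the remaining residual cancels as well. I would check that $v_1,v_2$ reproduce the initial conditions of $\phi_1,\phi_2$ exactly --- the correction term added to $v_1$ is exactly what forces $v_1'(0)=0$ --- and that their Wronskian is again exactly $\lambda/\eta$, so the WKB transfer matrix still has determinant one. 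The restriction encoded in $\eta_0$, namely $1+\eta^{2-\alpha}B>0$, guarantees that $C$ is real and positive throughout $[0,1]$ for every admissible $\lambda$ (using $\eta^2/\lambda^2\le\eta^{2-\alpha}/\tau$ on the range \eqref{eq:lambda-theta}); this is the non-turning-point regime in which the solutions genuinely oscillate rather than grow.

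The main obstacle is the sharp error estimate. Writing $\phi_j = v_j + e_j$ with $e_j(0)=e_j'(0)=0$ and $\eta^2 e_j'' + (\lambda^2+\eta^\alpha q)e_j = -r_j$, variation of parameters against $(\phi_1,\phi_2)$ produces a Green's function of size $\eta/\lambda$, so the residual $r_j=\mathcal{O}(\eta^{4+\alpha}\lambda^{-4})$ yields only the crude bound $e_j=\mathcal{O}(\eta^{3+\alpha}\lambda^{-5})$, two powers of $\eta$ short of what is needed. To recover the missing factor I would not estimate $e_j$ pointwise but track the combination appearing in $\mathcal{D}_A'$ directly through the energy identity
\[
\frac{d}{dt}\Big[\sum_j v_j^2 + \tfrac{\eta^2}{\lambda^2}(v_j')^2\Big]
= -\frac{2\eta^\alpha q}{\lambda^2}\sum_j v_j v_j' + \frac{2}{\lambda^2}\sum_j r_j v_j'.
\]
Here the residual contribution $\lambda^{-2}\int_0^1\sum_j r_j v_j'\,dt$ has no secular (mean) part: because $r_j$ carries the same trigonometric factor as $v_j$ while $v_j'$ carries the conjugate one, the products are of pure sine--cosine type and integrate to a genuinely oscillatory quantity, so a single integration by parts (non-stationary phase) supplies the extra factor $\eta/\lambda$. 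The same mechanism must be shown to govern the difference of the two energies at $t=1$; organizing the bookkeeping so that every surviving contribution is oscillatory, and hence gains the required powers of $\eta/\lambda$, is the technical heart of the argument. Granting this, the WKB energy at $t=1$ equals $\mathcal{D}_A'$ up to $\mathcal{O}(\eta^{5+\alpha}\lambda^{-5})$, and combining with the exact identity of the first step yields \eqref{eq:D2} and the claimed equivalence for all $0<\eta\le\eta_0$ and $0<\alpha<2$.
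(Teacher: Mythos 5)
Your first two steps reproduce the paper's own argument: the exact identity $\frac{4}{|T|^2}-2=P_1^2+P_2^2+Q_1^2+Q_2^2$ is Proposition~\ref{pro:formulaT2}, proved in \ref{Annex:B} by the same Wronskian computation you give, and the observation that $v_1,v_2$ solve $\eta^2 v''+(\lambda^2+\eta^\alpha\tilde q)v=0$ with the same Cauchy data, where $\|\tilde q-q\|_{C^0([0,1])}\leq K\eta^4/\lambda^4$, is exactly \eqref{eq:uv-tilde}--\eqref{eq:error-qqt}. The divergence is in the final step. The paper closes the proof right there: it applies Duhamel's formula with the kernel $G(t,y)=(v_2(t)v_1(y)-v_1(t)v_2(y))(\tilde q-q)(y)$ and concludes directly that $\frac{\eta}{\lambda}\|u_i'-v_i'\|_{C^0}+\|u_i-v_i\|_{C^0}\leq K\eta^{5+\alpha}\lambda^{-5}$; no oscillation or energy argument appears anywhere. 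You claim instead that this step yields only $\mathcal{O}(\eta^{3+\alpha}\lambda^{-5})$, and on the arithmetic you are right: the Wronskian of $(v_1,v_2)$ equals $\lambda/\eta$, so the Duhamel prefactor is $\eta^{\alpha}/(\eta^2 W)=\eta^{\alpha-1}/\lambda$ rather than the $\eta^{1+\alpha}/\lambda$ printed in the paper, and multiplying by $\|\tilde q-q\|=\mathcal{O}(\eta^4/\lambda^4)$ gives $\mathcal{O}(\eta^{3+\alpha}\lambda^{-5})$. So you have correctly located the weakest point of the published proof.

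However, your proposal does not close the gap it identifies: the decisive estimate is deferred as ``the technical heart'' and never carried out, and the mechanism you sketch fails on concrete counts. First, the premise that $\sum_j r_jv_j'$ has no secular part is false: $r_jv_j'=\frac{\eta^\alpha}{2}(q-\tilde q)\frac{d}{dt}(v_j^2)$, and $v_j^2$ is not purely oscillatory --- its mean is the slowly varying amplitude proportional to $C^{-1}$, whose derivative is of size $\mathcal{O}(\eta^\alpha/\lambda^2)$, so a non-oscillatory contribution survives that no integration by parts can remove. Second, the gain is quantitatively insufficient: you need a uniform factor $\eta^2$ over $\lambda\in[\sqrt{\tau\eta^\alpha},1]$, but one non-stationary-phase integration by parts gains $\eta/\lambda$, and even two gain $(\eta/\lambda)^2$, turning $\eta^{3+\alpha}\lambda^{-5}$ into $\eta^{5+\alpha}\lambda^{-7}$, which at the lower end $\lambda^2\sim\tau\eta^{\alpha}$ still exceeds the target $\eta^{5+\alpha}\lambda^{-5}$ by a factor $\eta^{-\alpha}$. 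Third, each integration by parts differentiates $\tilde q-q$, which already involves four derivatives of $q$ (through $(AB)''$), so it is not permitted under the hypothesis $q\in C^4([0,1])$. Finally, the other term in your energy identity, $-\frac{2\eta^\alpha q}{\lambda^2}\sum_j(u_ju_j'-v_jv_j')$, couples the pointwise error, known only at order $\eta^{3+\alpha}\lambda^{-5}$, to $u_j'\sim\lambda/\eta$, and you give no argument for it at all. In short: a reduction identical to the paper's, a correct observation that the direct perturbation estimate falls two powers of $\eta$ short of \eqref{eq:D2} (an observation which, incidentally, puts the constant in the paper's own Duhamel display in question), but no proof of the claimed $\mathcal{O}(\eta^{5+\alpha}\lambda^{-5})$ error bound.
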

\begin{rem}
This is an approximation of WKB-type. The uniform pointwise error estimate involves
second order derivatives of $v_1$ and $v_2$ and requires $q\in C^4([0,1])$.
\end{rem}
We will prove this proposition in section~\ref{sec:proofss}. Note that because up to error terms,
$\mathcal{D}_A^\prime$ depends only on  $q$ via the value of $(C,\frac{d C}{d t})$ at $t=0$ and $t=1$ and
on $\int_0^1 C(\lambda,s)ds$, only a large class of equivalent $q$
can be determined in Problem~A,  at best.
\begin{cor}
The usable data contains only some information of the first three derivatives of $q$ at $t=0$ and $t=1$, and on
\begin{equation}\label{eq:reangt1}
\int_0^1 F\left(\frac{\eta^\alpha}{\lambda^2}q, \frac{\eta^\alpha}{\lambda^2}\frac{dq}{dt}, \frac{\eta^\alpha}{\lambda^2}\frac{d^2q}{dt^2}\right) dt,
\end{equation}
where $F$ is an explicit algebraic function, independent of $\lambda$. Since \eqref{eq:reangt1} is stable
under sufficiently smooth rearrangements of $q$ (see e.g. Example~\ref{exa:bubble}) the determination of $q$ from
the data is not possible without additional a priori information on the variations of $q$.
\end{cor}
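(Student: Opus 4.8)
The plan is to treat Proposition~\ref{pro:approx-wkb-A} as a black box and read off, from the closed-form WKB expressions for $v_1$ and $v_2$, precisely which features of $q$ survive in $\mathcal{D}_A^{\prime}$. Set $\omega=\lambda/\eta$, $C_0=C(\lambda,0)$, $C_1=C(\lambda,1)$, $C_0^{\prime}=\frac{dC}{dt}(\lambda,0)$, $C_1^{\prime}=\frac{dC}{dt}(\lambda,1)$ and $\Phi(\lambda)=\int_0^1 C(\lambda,s)\,ds$. First I would evaluate $v_1(1)$, $v_2(1)$ and the scaled derivatives $\frac{\eta}{\lambda}\frac{dv_i}{dt}(1)$ directly from the given formulas. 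Because $\frac{\eta}{\lambda}=\omega^{-1}$, each of these is an elementary trigonometric expression in the single phase $\omega\Phi(\lambda)$ whose amplitudes are built only from $C_0,C_1,C_0^{\prime},C_1^{\prime}$; the integral $\Phi$ enters nowhere except inside the $\sin$ and $\cos$. Substituting into \eqref{eq:D2} then shows that, up to the stated error, the usable datum $\mathcal{D}_A^{\prime}(\lambda)$ is a fixed function of the five quantities $C_0,C_1,C_0^{\prime},C_1^{\prime},\Phi$ at that $\lambda$, and of nothing else.

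The second step is to unpack how these five quantities depend on $q$. By \eqref{eq:def-ABC} the coefficient $C$ is an explicit algebraic expression in $A$ and $B$, while $A$ is algebraic in $q$ and $B=-\frac14 A^3\frac{d^2A}{dt^2}$ is algebraic in $q,q^{\prime},q^{\prime\prime}$; carrying out the substitution $A=(1+\frac{\eta^\alpha}{\lambda^2}q)^{-1/4}$ exhibits $C(\lambda,t)$ as a function $F$ of the three scaled quantities $\frac{\eta^\alpha}{\lambda^2}q$, $\frac{\eta^\alpha}{\lambda^2}q^{\prime}$, $\frac{\eta^\alpha}{\lambda^2}q^{\prime\prime}$, with leading part $F\approx(1+\frac{\eta^\alpha}{\lambda^2}q)^{1/2}$. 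Differentiating once more, $\frac{dC}{dt}$ additionally brings in $q^{\prime\prime\prime}$. Hence the boundary amplitudes $C_0,C_1,C_0^{\prime},C_1^{\prime}$ encode only the $3$-jet of $q$ at the two endpoints $t=0,1$, which is exactly the assertion that the data carries only partial information on the first three derivatives of $q$ there, while the entire dependence on the interior of $q$ is funnelled through the single scalar $\Phi(\lambda)=\int_0^1 F\,ds$, that is through \eqref{eq:reangt1}.

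The conclusion is then immediate: any two admissible profiles $q_1,q_2$ that share the same $3$-jet at $t=0$ and $t=1$ and satisfy $\Phi[q_1](\lambda)=\Phi[q_2](\lambda)$ for every $\lambda\in[\sqrt{\tau\eta^\alpha},1]$ produce the same $\mathcal{D}_A^{\prime}$, hence the same data; so the interior of $q$ cannot be recovered without additional a priori information, and the explicit rearrangement of Example~\ref{exa:bubble} realises such a pair.

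I expect the real work to sit in the second step, in showing that $F$ is genuinely $\lambda$-independent and that no interior-sensitive quantity other than $\Phi$ leaks into the data. The subleading WKB term $\frac{\eta^2}{\lambda^2}B$ in $C$ carries explicit powers of $\lambda$, and the way to dispose of them is to integrate by parts in $\Phi$: the $q^{\prime\prime}$ content of $B$ is traded for $(q^{\prime})^2$ content plus endpoint contributions, the latter being absorbed into the boundary channel $C_0,C_1,C_0^{\prime},C_1^{\prime}$. The delicate point is to verify that, after this manipulation, the residual integrand is a $\lambda$-independent algebraic function of the three scaled derivatives and that all remaining $\lambda$-entangled remainders are uniformly bounded by $\mathcal{O}(\eta^{5+\alpha}\lambda^{-5})$, including at the smallest admissible incidence $\lambda=\sqrt{\tau\eta^\alpha}$, where these estimates are tightest.
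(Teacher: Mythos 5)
Your steps 1--3 reproduce, in substance, the paper's own argument: the corollary is stated without a separate proof because it is read off directly from Proposition~\ref{pro:approx-wkb-A} --- the data \eqref{eq:D2} depends on $q$ only through $\bigl(C,\tfrac{dC}{dt}\bigr)$ at $t=0,1$ (hence, by \eqref{eq:def-ABC}, through $q,q',q'',q'''$ at the endpoints) and through the single interior functional $\int_0^1 C(\lambda,s)\,ds$, after which the non-uniqueness claim is delegated to the rearrangement invariance of Example~\ref{exa:bubble}. Your reading of the trigonometric structure of $v_1(1)$, $v_2(1)$ and their scaled derivatives is exactly how this factorization is obtained.

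The problem is your step 4, which you rightly flag as the delicate point: the goal you set there (literal $\lambda$-independence of $F$, up to admissible errors) cannot be reached, because with the three arguments as listed it is false. Write $u=\tfrac{\eta^\alpha}{\lambda^2}q$, $v=\tfrac{\eta^\alpha}{\lambda^2}q'$, $w=\tfrac{\eta^\alpha}{\lambda^2}q''$; then $B=-\tfrac{5}{64}(1+u)^{-3}v^2+\tfrac{1}{16}(1+u)^{-2}w$ is algebraic in $(u,v,w)$, but $C=(1+u)^{1/2}\bigl(1+\tfrac{\eta^2}{\lambda^2}B\bigr)^{-2}$ carries the explicit prefactor $\tfrac{\eta^2}{\lambda^2}$. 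Your integration by parts trades the $w$-content for endpoint terms plus an interior term proportional to $\tfrac{\eta^2}{\lambda^2}\int_0^1 G(u)\,v^2\,ds$, and this prefactor survives: pointwise, $\tfrac{\eta^2}{\lambda^2}v^2=\eta^{2-\alpha}\cdot\tfrac{\eta^\alpha}{\lambda^2}\,v^2$ is not a constant multiple of any algebraic expression in $(u,v,w)$, since $\tfrac{\eta^\alpha}{\lambda^2}$ cannot be reconstructed from the jet values. Nor can such terms be swept into the error: their contribution to $\mathcal{D}_A^\prime$, entering through the phase $\tfrac{2\lambda}{\eta}\int_0^1 C$, is of order $\eta^{1+4\alpha}\lambda^{-9}$, which at the smallest admissible incidence $\lambda^2=\tau\eta^\alpha$ is of order $\eta^{1-\alpha/2}$, far above the tolerance $\eta^{5+\alpha}\lambda^{-5}\sim\eta^{5-3\alpha/2}$. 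The way out is not to fight this dependence but to note that it is harmless: the honest form of \eqref{eq:reangt1} is $\int_0^1 F\bigl(u,v,w;\tfrac{\eta^2}{\lambda^2}\bigr)\,ds$ with $F$ explicit and algebraic, the fourth argument being a known function of $\lambda$ involving no $q$. The conclusion of the corollary only uses that the integrand is a pointwise function of the $2$-jet of $q$ with known coefficients, and the bump-translation of Example~\ref{exa:bubble} preserves every integral of that type, $\lambda$-dependent or not. So your steps 1--3 already prove the corollary as it is actually used; the correct move in step 4 is to weaken the $\lambda$-independence claim (it holds only for the leading part $(1+u)^{1/2}$ of the integrand), not to attempt to establish it.
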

\begin{exa}\label{exa:bubble}
Let $q_0\in C^4([0,1])$ be a positive function  which is constant on the interval $[t_0,t_1]$ with $1<t_0<t_1<0$. Let $q_1\in C^4(\mathbb{R})$ be positive function with compact support in
$[0,(t_1-t_0)/2]$. Consider the family of functions $q_s$ given by
$$
q_s(t)=q_0(t) \mbox{ for } t\in[0,t_0]\cup[t_1,1], \mbox{ and } q_s(t) = q_0(t) + q_1(t-s)\mbox{ for } t\in(t_0,t_1)
$$
where $s$ is a parameter in $(t_0, (t_0+t_1)/2)$. Then for all such $s$, $q_s\in C^4([0,1])$ has the same end-point values, and for any smoooth functional $F$,
$$
\frac{d}{ds} \int_0^1 F\left(\frac{\eta^\alpha}{\lambda^2}q_s, \frac{\eta^\alpha}{\lambda^2}\frac{dq_s}{dt}, \frac{\eta^\alpha}{\lambda^2}\frac{d^2q_s}{dt^2}\right) dt=0.
$$
Figure~\ref{fig:reagt} represents such a construction for two different values of $s$. Note that the order of accuracy is not the issue when $q$
is smooth, as the WKB-type ansatz can be continued to obtain any order of accuracy, for a suitable $\eta_0$. Counter examples to uniqueness such as the one depicted
in Figure~\ref{fig:reagt} would still hold.
\end{exa}

\begin{figure}\label{fig:reagt}
\includegraphics[width=0.9\columnwidth]{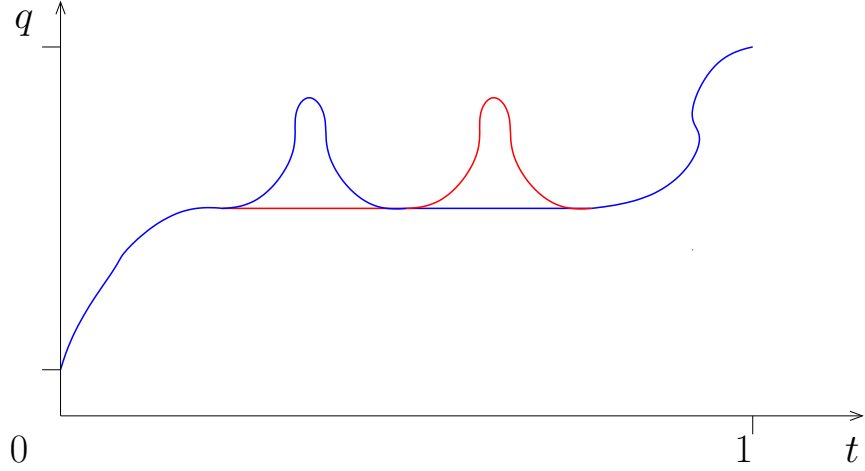}
\caption{Example of non-unique determination from the data. If the dielectric permittivity $q$ follows the blue curve, or the red variant, the available data
$\mathcal{D}_A^\prime$ is the same, for any $\lambda>0$.}
\end{figure}

The main result in this section is devoted to the extraction of a few properties of $q$
at $t=0$ and $t=1$ from moderate angles of incidence, for a sufficiently smooth $q$.

\begin{prop}\label{pro:moderate}
Suppose that $q\in C^4([0,1])$, and that $\alpha$ is close to one. Using the data available for $\frac{1}{2}\leq \lambda \leq 1$,
in Problem~A given by \eqref{eq:problemeA}, the parameters $A_i, i=1,\ldots,5$ given by
\begin{eqnarray*}
A_1 &=& q(0)+q(1) + \mathcal{O}(\eta^{2}),\\
A_2 &=& q(0)q(1) + \mathcal{O}(\eta^{2}), \\
A_3 &=& \frac{d \ln q}{dt}(1)-\frac{d \ln q}{dt}(0) + \mathcal{O}(\eta),\\
A_4 &=& \int_0^1 q(s)ds + \mathcal{O}(\eta^{3-\alpha}),\\
A_5 &=& \int_0^1 q^2(s)ds + \mathcal{O}(\eta^{4-2\alpha}).
\end{eqnarray*}
can be determined.
\end{prop}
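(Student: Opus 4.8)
The plan is to feed the explicit WKB representation of Proposition~\ref{pro:approx-wkb-A} into \eqref{eq:defD2}, rewrite $\mathcal{D}_A'$ as a slowly varying envelope modulated by a rapid oscillation, and then read each $A_i$ off a different feature of that signal as $\lambda$ ranges over the moderate range $[\tfrac12,1]$, where $\eta^\alpha\lambda^{-2}\le 4\eta^\alpha$ is uniformly small. First I would evaluate the boundary terms in \eqref{eq:D2}. Writing $C_0=C(\lambda,0)$, $C_1=C(\lambda,1)$ and $\Phi(\lambda)=\tfrac{\lambda}{\eta}\int_0^1 C(\lambda,s)\,ds$, and substituting the formulas for $v_1,v_2$, the products $v_i(1)^2$ and $(\tfrac{\eta}{\lambda}v_i'(1))^2$ reorganise into the harmonics $\sin^2\Phi,\cos^2\Phi,\sin\Phi\cos\Phi$, giving
\[
\mathcal{D}_A' = M(\lambda)+N(\lambda)\cos(2\Phi)+P(\lambda)\sin(2\Phi)+\mathcal{O}\!\left(\tfrac{\eta^{5+\alpha}}{\lambda^{5}}\right),
\]
with
\[
M=\tfrac12\Big(C_0+\tfrac1{C_0}\Big)\Big(C_1+\tfrac1{C_1}\Big),\qquad
N=-\frac{(1-C_0^2)(1-C_1^2)}{2C_0C_1},
\]
and $P$ the asymmetric piece produced by the $\beta$-correction in $v_1$ and the amplitude derivatives, whose leading term is $\tfrac{\eta^{1+2\alpha}}{4\lambda^5}\big(q(0)q'(1)-q'(0)q(1)\big)$. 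Because $\eta^{-1}$ is large, $2\Phi\approx 2\lambda/\eta$ sweeps many periods over $[\tfrac12,1]$, whereas $M,N,P$ and the phase correction vary on the $O(1)$ scale; the sampling rate $\eta^2$ resolves the oscillation, since adjacent samples move the phase by $O(\eta)$.

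Next I would separate the slow and fast parts. Local averaging over one period isolates the envelope $M(\lambda)$; demodulating the residual against $\cos(2\Phi)$ and $\sin(2\Phi)$, using that the leading phase $2\lambda/\eta$ is known, yields $N(\lambda)$ with its sign together with the small shift $\delta_0=\arctan(P/N)$; tracking the zeros of the oscillation refines $\Phi(\lambda)$ itself. It then remains to expand these functions in $\eta^\alpha\lambda^{-2}$ and match powers of $\lambda^{-2}$. Using $C_j=1+\tfrac{\eta^\alpha}{2\lambda^2}q(j)+\mathcal{O}(\eta^{2\alpha}\lambda^{-4})$ one finds
\[
M-2=\frac{\eta^{2\alpha}}{4\lambda^{4}}\big(q(0)^2+q(1)^2\big)+\ldots,\qquad
N=-\frac{\eta^{2\alpha}}{2\lambda^{4}}q(0)q(1)+\ldots,
\]
so the $\lambda^{-4}$ coefficients give $q(0)^2+q(1)^2$ and $A_2=q(0)q(1)$, hence $A_1=q(0)+q(1)$ up to the reflection $q\mapsto -q$; the phase shift gives $\delta_0=-\tfrac{\eta}{2\lambda}A_3+\ldots$, hence $A_3$; and
\[
\Phi=\frac{\lambda}{\eta}+\frac{\eta^{\alpha-1}}{2\lambda}\int_0^1 q-\frac{\eta^{2\alpha-1}}{8\lambda^{3}}\int_0^1 q^2+\ldots
\]
yields $A_4$ and $A_5$ from its $\lambda^{-1}$ and $\lambda^{-3}$ coefficients. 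The residual global sign of $A_1$ is then fixed by the measured sign of $A_4=\int_0^1 q$ (equivalently, by continuity as $\eta\to0$), generically.

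The main obstacle is the error accounting, i.e.\ justifying the displayed remainder orders. Three sources must be combined and propagated through the extraction: the WKB remainder $\mathcal{O}(\eta^{5+\alpha}\lambda^{-5})$ of Proposition~\ref{pro:approx-wkb-A}, the truncation of the series in $\eta^\alpha\lambda^{-2}$ (whose first neglected terms sit at higher powers of $\lambda^{-2}$, e.g.\ the $q''$ and $B$ contributions to $C$, so that a clean $\lambda$-power fit on $[\tfrac12,1]$ removes them), and the resolution of the slow/fast separation on the finite window with sampling $\eta^2$ and noise $\eta^{5+\alpha}\lambda^{-5}$. The delicate point is that $M-2$ and $N$ are of size $\eta^{2\alpha}$ while the noise is $\eta^{5+\alpha}\lambda^{-5}$, so the endpoint data survive with only a power-of-$\eta$ relative error; quantifying this --- and controlling the demodulation rigorously rather than heuristically --- is where the real work lies, and is what pins the exponents $\eta^2,\eta,\eta^{3-\alpha},\eta^{4-2\alpha}$ in the statement.
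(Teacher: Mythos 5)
Your proposal follows essentially the same route as the paper's own proof (Proposition~\ref{pro:modD2}): both substitute the WKB approximation of Proposition~\ref{pro:approx-wkb-A} into $\mathcal{D}_A^\prime$, obtain a slow envelope plus a $\cos/\sin$ oscillation at the fast phase $2\lambda/\eta+\phi(\lambda)$, and read $A_1,A_2$ off the envelope and cosine amplitude, $A_3$ off the sine component, and $A_4,A_5$ off the $\lambda$-dependence of the phase, exploiting the fine $\eta^2$ sampling; your closed forms $M=\frac12\bigl(C_0+\tfrac1{C_0}\bigr)\bigl(C_1+\tfrac1{C_1}\bigr)$ and $N=-\frac{(1-C_0^2)(1-C_1^2)}{2C_0C_1}$ expand exactly to the paper's Taylor coefficients. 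The only differences are cosmetic: the paper extracts the five parameters by a least-squares fit of the normalized explicit expansion $\tilde{\mathcal{D}}_2=\frac{4\lambda^4}{\eta^{2\alpha}}(\mathcal{D}_A^\prime-2)$ rather than by your demodulation scheme, and it is no more quantitative than you are about propagating the stated error exponents.
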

The proof of this proposition is given in section~\ref{sec:proofss}.
\begin{rem}
This first order data respects the invariance by mirror symmetry discussed in Proposition~\ref{pro:nonu-orth-2}. If the LC cell is flipped, its index
becomes $\tilde{q}(t)=q(1-t)$, and the parameters $A_i, i=1,\ldots,5$ are unchanged. The fact that only a handful of moments can be recovered from
moderate incidence is known for related more general problems, see Sharafutdinov~\cite{SHARAFUTDINOV-96}.
\end{rem}

To investigate the problem further, let us assume that the map $\lambda\to C(\lambda,t)$ defined in \eqref{eq:def-ABC} is known at the endpoints $t=0$ and $t=1$
 up to the order of approximation. The usable data $\mathcal{D}_A^\prime$ takes the form
$$
\mathcal{D}_A^\prime = K_1 \cos\left(\frac{2\lambda}{\eta} \int_0^1 C (\lambda,s) ds +K_2 \right) +  K_3 + \mathcal{O}\left( \frac{\eta^{5+\alpha}}{\lambda^{5}} \right),
$$
where $K_1$, $K_2$ and $K_3$ are known parameters. Since the data is sampled in $\lambda$ at a fine $\eta^2$ rate, the leading order $\lambda$ dependent data is
\begin{equation}\label{eq:preC}
\int_0^1 C (\lambda,s) ds
\end{equation}
which, at leading order, is $\mathcal{D}_B[q](\lambda)$ given by \eqref{eq:defDB}. Note that this problem also arises naturally if $T$, and not simply its amplitude, 
was considered in Problem~A, as \eqref{eq:preC} is precisely the rate of change of the phase of $T$. 

\section{\label{sec:ProblemB} On Problem B : uniqueness under a monotonicity assumption}
We now turn to the reconstruction of $q$ from $\mathcal{D}_B[q]$
.
\begin{thm}\label{thm:levelsetuniq} Consider $\lambda\to\mathcal{D}_B[q](\lambda)$ for all $\lambda\in [\sqrt{\tau \eta^{\alpha}},1]$ as defined in Problem~B given by \eqref{eq:defDB}.
\begin{itemize}
\item The map $q\to \mathcal{D}_B[q]$ is invariant  under level-set preserving rearrangements of $q$.  That is, given two functions $q_1$ and $q_2$ defined on $[0,1]$
such that $\sup |q_i|\leq1$ which satisfy
$$
\textrm{meas}\left\{ q_1(t)=y,t\in [0,1]\right\} = \textrm{meas}\left\{q_2 (t)=y,t\in [0,1]\right\}\quad \forall y\in [-1,1],
$$
we have $ \mathcal{D}_B[q_1]\equiv \mathcal{D}_B[q_2]$.

\item If  $q\in C^1([0,1])$ is strictly increasing,  $\sup |q|\leq1$,  and $q^{-1} \in C^1([q(0),q(1)])$, then $\mathcal{D}_B[q]$ determines $q$ uniquely.
The reconstruction problem can be formulated in terms of classical transforms in this case. There holds 
\begin{equation}\label{eq:Hankel-Laplace}
 \frac{1}{\lambda}\mathcal{D}_B[q](\lambda) + \frac{d \mathcal{D}_B[q]}{d\lambda} (\lambda) = \frac{2}{\sqrt{\eta^{\alpha}}} \mathcal{L} F_0 Q
\left(\frac{\lambda}{\sqrt{\eta^{\alpha}}}\right),
\end{equation}
where $\mathcal{L}$ is the Laplace transform, $\mathcal{L}f (s)=\int_0^\infty e^{-st} f(t)dt$,
and $F_0$ is the first Hankel transform, $F_0f (s) =\int_0^\infty t f(t) J_0(st) dt$,
where $J_0$ is the Bessel function of the first kind of order $0$, and $Q$ is given by
$$
Q:= x \to \mathbf{1}_{(q(0),q(1))}(x^2)  \frac{ d q^{-1}}{d x}(x^2).
$$
\end{itemize}
\end{thm}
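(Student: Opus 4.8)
The plan is to treat the two assertions separately. The first bullet (invariance under level-set preserving rearrangements) is essentially a change-of-variables observation. The integrand in \eqref{eq:defDB} is a fixed function of $q$ alone, namely $g_\lambda(y)=\sqrt{1+\eta^{\alpha}\lambda^{-2}y}$, so that $\mathcal{D}_B[q](\lambda)=\int_0^1 g_\lambda(q(s))\,ds$ is the integral of $g_\lambda$ against the push-forward under $q$ of Lebesgue measure on $[0,1]$. Writing $\mathcal{D}_B[q](\lambda)=\int_{-1}^1 g_\lambda\,d\mu_q$ with $\mu_q = q_\#(ds)$, I would observe that the hypothesis on the level sets of $q_1$ and $q_2$ is exactly the statement $\mu_{q_1}=\mu_{q_2}$, whence $\mathcal{D}_B[q_1](\lambda)=\mathcal{D}_B[q_2](\lambda)$ simultaneously for every $\lambda$. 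This needs no regularity and takes only a line or two.

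For the monotone case I would first record that the constraint $\lambda\in[\sqrt{\tau\eta^{\alpha}},1]$ from \eqref{eq:lambda-theta} with $\tau>1$ and $|q|\le1$ forces $\lambda^2+\eta^{\alpha}y\ge(\tau-1)\eta^{\alpha}>0$ on the value range $y\in[q(0),q(1)]$; hence the radicand stays bounded away from zero, $\mathcal{D}_B[q]$ is a smooth function of $\lambda$, and differentiation under the integral sign is legitimate. Since $q$ is a $C^1$ increasing bijection with $C^1$ inverse, I would then change variables $y=q(s)$, $s=q^{-1}(y)$, turning \eqref{eq:defDB} into $\mathcal{D}_B[q](\lambda)=\int_{q(0)}^{q(1)}\sqrt{1+\eta^{\alpha}\lambda^{-2}y}\,(q^{-1})'(y)\,dy$.

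The one genuinely clever move is to form exactly the combination on the left of \eqref{eq:Hankel-Laplace}. Differentiating in $\lambda$ under the integral and adding $\lambda^{-1}\mathcal{D}_B[q]$, the two half-integrated contributions collapse because the numerator becomes $(1+\eta^{\alpha}\lambda^{-2}y)-\eta^{\alpha}\lambda^{-2}y=1$, leaving
\[
\frac{1}{\lambda}\mathcal{D}_B[q](\lambda)+\frac{d\mathcal{D}_B[q]}{d\lambda}(\lambda)=\int_{q(0)}^{q(1)}\frac{(q^{-1})'(y)}{\sqrt{\lambda^2+\eta^{\alpha}y}}\,dy.
\]
Everything after this is identification. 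To match the right-hand side I would invoke the classical Laplace transform of the Bessel kernel, $\int_0^\infty e^{-pr}J_0(rt)\,dr=(p^2+t^2)^{-1/2}$, and Fubini to obtain $\mathcal{L}F_0Q(p)=\int_0^\infty tQ(t)(p^2+t^2)^{-1/2}\,dt$; the substitution $y=x^2$ together with $Q(x)=\mathbf{1}_{(q(0),q(1))}(x^2)(q^{-1})'(x^2)$ and $p=\lambda/\sqrt{\eta^{\alpha}}$ then reproduces the displayed integral up to the factor $2/\sqrt{\eta^{\alpha}}$, which is precisely \eqref{eq:Hankel-Laplace}. (One should check here that the value range sits in $[0,\infty)$ so that $y=x^2$ covers $(q(0),q(1))$, and justify the Fubini interchange using the compact support of $Q$.)

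Uniqueness then follows by inverting the two transforms. The left side of \eqref{eq:Hankel-Laplace} is computable from the data $\mathcal{D}_B[q]$, so $\mathcal{L}F_0Q$ is known for $p=\lambda/\sqrt{\eta^{\alpha}}$ in the window coming from \eqref{eq:lambda-theta}; because $Q$ has compact support this Laplace transform is analytic in $p$, so its values on that interval determine it on all of $(0,\infty)$, hence determine $F_0Q$ by injectivity of $\mathcal{L}$ and then $Q$ by the self-inversion $F_0F_0=\mathrm{id}$ of the order-zero Hankel transform. From $Q$ one reads off the value interval $(q(0),q(1))$ as its support and $(q^{-1})'$ on it; integrating and using the normalization $\int_{q(0)}^{q(1)}(q^{-1})'=1$ forced by $q^{-1}\colon[q(0),q(1)]\to[0,1]$ recovers $q^{-1}$, and hence $q$. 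The main obstacle I anticipate is not the algebra but the analytic bookkeeping of this last step: justifying the interchange defining $\mathcal{L}F_0Q$, and above all accommodating the fact that $\lambda$ ranges only over the bounded window \eqref{eq:lambda-theta} rather than all of $(0,\infty)$ — which is exactly what compels the appeal to analyticity of the Laplace transform to propagate the data before either transform can be inverted.
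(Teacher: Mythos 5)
Your first bullet and your derivation of the identity \eqref{eq:Hankel-Laplace} essentially coincide with the paper's. The rearrangement invariance is, as you say, just the observation that $\mathcal{D}_B[q](\lambda)=\int_{-1}^{1}\sqrt{1+\eta^{\alpha}\lambda^{-2}y}\,d\mu_q(y)$ depends on $q$ only through the push-forward measure $\mu_q=q_{\#}(ds)$ (the paper phrases this via the coarea formula, applied to $\mathcal{D}_B^\prime$ rather than to $\mathcal{D}_B$ directly, but it is the same mechanism), and the paper obtains \eqref{eq:Hankel-Laplace} by exactly your computation: form $\frac{1}{\lambda}\mathcal{D}_B+\frac{d\mathcal{D}_B}{d\lambda}=\int_0^1(\lambda^2+\eta^{\alpha}q(s))^{-1/2}ds$, substitute $y=v^2$, and expand the kernel $(p^2+v^2)^{-1/2}$ as the Laplace transform of $J_0$. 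Where you genuinely diverge is the uniqueness argument. The paper never inverts the Laplace or Hankel transforms: it proves uniqueness through Lemma~\ref{lem:Ktransform}, which shows (by holomorphic extension of $\mathcal{D}_B[q]$ to an annulus and a residue computation against an explicit error-function kernel) that the data determines $\mathcal{D}_B^\prime[q](t)=\int_0^1 e^{-\ii t q(s)}\,ds$ for every real $t$; this is the Fourier transform of $\mathbf{1}_{(q(0),q(1))}\frac{dq^{-1}}{dy}$, and Fourier inversion recovers $q$. Your substitute --- analytic continuation of $\mathcal{L}F_0Q$ from the window $p\in[\sqrt{\tau},\eta^{-\alpha/2}]$ to all of $(0,\infty)$, injectivity of $\mathcal{L}$, self-inversion of $F_0$ --- plays the role of that lemma, and correctly locates the ill-posedness in the same place, namely analytic continuation from an interval.

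However, your uniqueness argument has a genuine gap that the paper's does not: it fails whenever $q$ takes negative values, which the hypotheses ($q$ strictly increasing, $\sup|q|\le1$) allow. The function $Q(x)=\mathbf{1}_{(q(0),q(1))}(x^2)\frac{dq^{-1}}{dx}(x^2)$ samples $\frac{dq^{-1}}{dy}$ only at arguments $y=x^2\ge0$; if $q(0)<0$ the support of $Q$ is $[0,\sqrt{q(1)})$, so recovering $Q$ yields $q(1)$ and $(q^{-1})^\prime$ on $(0,q(1))$ but gives no access to $q(0)$ or to $q^{-1}$ on $(q(0),0)$ --- the step ``read off the value interval as the support of $Q$'' breaks down. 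Indeed \eqref{eq:Hankel-Laplace} itself is false for such $q$: for $q(s)=s-\frac{1}{2}$ the left-hand side equals $\frac{2}{\eta^{\alpha}}\bigl(\sqrt{\lambda^2+\eta^{\alpha}/2}-\sqrt{\lambda^2-\eta^{\alpha}/2}\bigr)$ while the right-hand side equals $\frac{2}{\eta^{\alpha}}\bigl(\sqrt{\lambda^2+\eta^{\alpha}/2}-\lambda\bigr)$. You half-noticed this in your parenthetical remark that the range must sit in $[0,\infty)$, but did not resolve it, and it cannot be waved away since the theorem's hypotheses permit negative values. In fairness this is also a defect of the theorem as stated (the displayed identity needs $q\ge0$, and the paper's own change of variables silently assumes it), but the paper's uniqueness claim survives because its proof runs through $\mathcal{D}_B^\prime$, which sees the whole range of $q$, negative values included; by making uniqueness depend on \eqref{eq:Hankel-Laplace} you import the restriction $q\ge0$ into the one part of the theorem where it is not needed. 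To close the gap, either add the hypothesis $q(0)\ge0$, or prove uniqueness from $\int_0^1(\lambda^2+\eta^{\alpha}q(s))^{-1/2}ds$ by a method valid on the full range: the paper's Fourier route, or analytic continuation in $z=\lambda^2$ of $z\mapsto\int_{q(0)}^{q(1)}(z+\eta^{\alpha}y)^{-1/2}(q^{-1})^\prime(y)\,dy$ followed by recovery of the jump across its branch cut.
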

\begin{rem}
The Hankel Transform is self-invertible, and stable. Inverting the Laplace transform is well known to be an exponentially ill-posed task 
see e.g. Epstein \& Schotland \cite{EPSTEIN-SCHOTLAND-08}.
Even more so when, as it is the case here, the Laplace data is limited to an interval $(\sqrt{\tau},\eta^{-\alpha/2})$ located away from zero.
The data $\mathcal{D}_B[q]$ can be written as a linear integral operator
acting on $\frac{d q^{-1}}{dy}$ in the form
$$
\mathcal{D}_B (\lambda) = K\frac{d q^{-1}}{dy} (\frac{\eta^\alpha}{\lambda^2}) \quad \mbox{ with } Kf(x)=\int_{q(0)}^{q(1)} \sqrt{1 +xy}  f(y) dy \mbox{ for } 0\leq x\leq1.
$$
It is clear that $K:C^0([q(0),q(1)])\to C^0([\eta^\alpha,\tau^{-1}])$  is a compact operator : its inverse is therefore unbounded. Including grazing angles
measures, corresponding to $\tau=1$ would still lead to an unstable problem.
\end{rem}
To prove this result, we note that the available data takes a much simpler form after an integral transformation.
\begin{lem}\label{lem:Ktransform}
Given a function $q$ such that defined on $[0,1]$ such that $\sup_{x\in[0,1]}|q (t)|\leq1$, we can define $\hat{\mathcal{D}}_B[q]$ as the
complex valued holomorphic extension of $\mathcal{D}_B[q]$ on the annulus $\sqrt{\eta^{\alpha}\tau}<|\zeta|<1$.

Let $\hat{\mathcal{K}}$ represent the holomorphic extension of
$$
\mathcal{K}: x\to 1 + 2\sqrt{x} e^{x} \int_0^{\sqrt{x}} e^{-t^2} dt
$$
to the same annulus.

Let $\mathcal{D}_B^\prime [q]\in C^\infty(\mathbb{R})$ be given by
$$
\mathcal{D}_B^\prime [q](t) = \int_0^1 e^{\displaystyle - \ii t q(s) } ds \quad  \mbox{for all } t\in\mathbb{R}.
$$
Then, for any $t\in\mathbb{R}$ and $\rho \in (\sqrt{\tau \eta^{\alpha}}, 1)$, we have
$$
\mathcal{D}_B^\prime[q](t) = \frac{1}{2\pi} \int_{0}^{2\pi}  \hat{\mathcal{D}}_B [q]\left(\rho e^{\ii s}\right)
\hat{K}\left( \ii \eta^{-\alpha} \rho^2 t e^{ \ii 2 s}\right) ds.
$$
The correspondence between  $\mathcal{D}_B[q]$ and $\mathcal{D}_B^\prime[q]$ is one to one.
\end{lem}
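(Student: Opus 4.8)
The plan is to prove the formula by reducing it to a single scalar identity and then matching power–series coefficients. The first observation is that both quantities depend on $q$ only through the push‑forward $\mu$ of Lebesgue measure on $[0,1]$ under $q$ (equivalently, through the moments $M_n=\int_0^1 q(s)^n\,ds$): indeed $\mathcal{D}_B^\prime[q](t)=\int_{-1}^1 e^{-\ii t y}\,d\mu(y)$ is the Fourier transform of $\mu$, while $\hat{\mathcal{D}}_B[q](\zeta)=\int_{-1}^1\left(1+\eta^\alpha\zeta^{-2}y\right)^{1/2}d\mu(y)$ (the principal branch is holomorphic on the annulus because $|\eta^\alpha\zeta^{-2}|\le 1/\tau<1$). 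Since every kernel in the claimed formula is continuous on the compact product $[0,1]\times[0,2\pi]$, Fubini lets me pull the $s$–integration outside, and the statement reduces to the pointwise identity, for each fixed $y\in[-1,1]$ and $t\in\mathbb{R}$,
\[
e^{-\ii t y}=\frac{1}{2\pi}\int_0^{2\pi}\left(1+\frac{\eta^\alpha}{\rho^2}e^{-2\ii s}y\right)^{1/2}\hat{K}\!\left(\ii\eta^{-\alpha}\rho^2 t\,e^{2\ii s}\right)ds ,
\]
where $\zeta=\rho e^{\ii s}$ has been used so that the argument of $\hat{K}$ is exactly $\ii t/w$ with $w=\eta^\alpha\zeta^{-2}$.

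Next I would expand both factors on the circle. Because $\rho>\sqrt{\tau\eta^\alpha}$ and $|y|\le1$ give $|\eta^\alpha\rho^{-2}y|\le1/\tau<1$, the integrand of $\hat{\mathcal{D}}_B$ has a generalized‑binomial expansion $\sum_{n\ge0}\gamma_n(\eta^\alpha\rho^{-2}y)^n e^{-2\ii n s}$ converging uniformly in $s$ by the Weierstrass test; and since $\mathcal{K}$ is entire (the function $2\sqrt{x}\int_0^{\sqrt{x}}e^{-t^2}dt=\sqrt{\pi x}\,\mathrm{erf}(\sqrt{x})$ is an entire function of $x$, with rapidly decaying Taylor coefficients $c_m$), the series $\hat{K}(\ii\eta^{-\alpha}\rho^2 t\,e^{2\ii s})=\sum_{m\ge0}c_m(\ii\eta^{-\alpha}\rho^2 t)^m e^{2\ii m s}$ also converges uniformly. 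I may therefore multiply the two series and integrate term by term. The angular integral acts as a Fourier projection, $\frac{1}{2\pi}\int_0^{2\pi}e^{2\ii(m-n)s}\,ds=\delta_{mn}$, which selects the diagonal, cancels all powers of $\eta^\alpha$ and of $\rho$, and leaves the single series $\sum_{n\ge0}\gamma_n c_n(\ii t y)^n$. Notice that this outcome is manifestly independent of the radius $\rho$, the expected consistency check, reflecting that the circle integral is a contour integral of a holomorphic integrand.

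The heart of the proof is then the coefficient bookkeeping: the formula holds precisely when $\gamma_n c_n=(-1)^n/n!$ for every $n$, so that the surviving series is $\sum_n(-\ii t y)^n/n!=e^{-\ii t y}$. To pin down $c_n$ I would use the first–order ODE satisfied by $g(y)=e^{y^2}\int_0^y e^{-s^2}\,ds$, namely $g^\prime=2yg+1$ with $g(0)=0$, which forces $g(y)=\sum_{j\ge0}\frac{2^j}{(2j+1)!!}y^{2j+1}$ and hence $c_0=1$, $c_n=2^n/(2n-1)!!$ for $n\ge1$. The matching condition then becomes the elementary central–binomial identity $\frac{2^n}{(2n-1)!!}\binom{-1/2}{n}=\frac{(-1)^n}{n!}$; that is, the calibration forces $\gamma_n=\binom{-1/2}{n}$, so that $\mathcal{K}$ is tuned against the expansion of the integrand in its $(1+\eta^\alpha\zeta^{-2}y)^{-1/2}$ normalization (this fixes the precise power of the integrand of $\hat{\mathcal{D}}_B$ for which the displayed kernel is correct). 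This algebraic identity, together with the analytic justification of the term‑by‑term integration, is the one place where the argument is genuinely not formal, and is where I expect essentially all the real work — and the only serious risk of a sign or power bookkeeping slip — to lie.

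Finally, the asserted one‑to‑one correspondence follows from the same moment picture. The map $\mathcal{D}_B^\prime[q]=\widehat{\mu}$ determines the probability measure $\mu$ uniquely by injectivity of the Fourier transform, hence determines $\mathcal{D}_B[q]$; conversely the Laurent coefficients of $\hat{\mathcal{D}}_B[q]$ recover $\gamma_n\eta^{\alpha n}M_n$, hence all moments $M_n$, hence $\mu$ and thus $\mathcal{D}_B^\prime[q]$. The displayed integral formula realizes one direction of this bijection explicitly, and the computation above shows it is well defined and $\rho$‑independent.
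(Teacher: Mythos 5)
Your proof is correct and follows essentially the same route as the paper's: both expand the annulus data in the moments $\int_0^1 q^n\,ds$, use the orthogonality $\frac{1}{2\pi}\int_0^{2\pi}e^{2\ii(m-n)s}\,ds=\delta_{mn}$ on the circle $|\zeta|=\rho$ to select the diagonal (whence the $\rho$-independence), and resum the exponential via the identity $\binom{-1/2}{n}\cdot\sqrt{\pi}/\Gamma\bigl(n+\tfrac12\bigr)=(-1)^n/n!$; your Fubini reduction to a pointwise scalar identity, your ODE derivation of the Taylor coefficients of $\mathcal{K}$, and your Fourier-injectivity/moment-determinacy argument for the bijection are presentational refinements of the same mechanism (the last one makes precise what the paper dismisses as ``the same steps backwards''). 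Your calibration remark is also exactly right and flags a genuine slip in the paper: its proof opens by expanding ``$\mathcal{D}_B[q]$'' with coefficients $(-1)^n\Gamma\bigl(n+\tfrac12\bigr)/(\sqrt{\pi}\,n!)=\binom{-1/2}{n}$, i.e.\ the expansion of $(1+x)^{-1/2}$, although \eqref{eq:defDB} defines $\mathcal{D}_B$ with the power $+1/2$; so the lemma with the displayed kernel is the correct statement for $\int_0^1(1+\eta^{\alpha}\lambda^{-2}q)^{-1/2}\,ds$ (equivalently $\lambda\bigl(\tfrac{1}{\lambda}\mathcal{D}_B+\tfrac{d\mathcal{D}_B}{d\lambda}\bigr)$, still determined by the data), and your bookkeeping identifies precisely the retuning of $\mathcal{K}$ needed if one insists on the $+1/2$ normalization.
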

We prove this result in section~\ref{sec:proofss}. This in turn shows that $\mathcal{D}_B[q]$ determines $q$ uniquely within the class of
smooth monotone functions, albeit in a very unstable manner.
\begin{proof}[Proof of theorem~\ref{thm:levelsetuniq}]
The first statement is a straightforward consequence of the coarea formula applied to the map $t\to\mathcal{D}_B^\prime(t)$.
The second statement follows from the change of variable formula as follows
$$
\mathcal{D}_B^\prime[q](t) =\int_0^{1} e^{-\ii t q(s)} ds =  \int_\mathbb{R} e^{-\ii t y} \mathbf{1}_{(q(0),q(1))}(y)  \frac{ d q^{-1}}{d y}(y) dy,
$$
Thus $\mathcal{D}_B^\prime[q] $ is the Fourier transform of $\frac{ d q^{-1}}{d y}$ on the range of $q$. Applying the inverse Fourier transform we therefore recover
the values of  $q(0)$, $q(1)$ and $\frac{ d q^{-1}}{d y}$, and therefore $q$. We do not claim that the requirement $q^{-1} \in C^{1}([q(0),q(1)])$ is sharp.

Let us now turn to the equation satisfied by $\mathcal{D}_B[q]$. We write,
$$
\frac{1}{\lambda}\mathcal{D}_B[q](\lambda) + \frac{d \mathcal{D}_B[q]}{d\lambda} (\lambda) =
\int_{0}^{1} \frac{1}{\sqrt{\eta^{\alpha}}}\frac{1}{\sqrt{\left(\frac{\lambda}{\sqrt{\eta^{\alpha}}}\right)^{2} + q(y)}} dy.
$$
Using the same change of variable as above, it follows that
\begin{eqnarray*}
\frac{1}{\lambda}\mathcal{D}_B[q](\lambda) + \frac{d \mathcal{D}_B[q]}{d\lambda} (\lambda) &=&
 \frac{2}{\sqrt{\eta^{\alpha}}}\int_0^\infty \frac{1}{\sqrt{\left(\frac{\lambda}{\sqrt{\eta^{\alpha}}}\right)^{2} + v^2}} f(v) v dv \\
 &=&
 \frac{2}{\sqrt{\eta^{\alpha}}}\int_0^\infty e^{-\frac{\lambda}{\sqrt{\eta^{\alpha}}} r}F_0f(r)  dr \\
 &=&
 \frac{2}{\sqrt{\eta^{\alpha}}}\mathcal{L} F_0 f \left( \frac{\lambda}{\sqrt{\eta^{\alpha}}}\right),
\end{eqnarray*}
as announced.
\end{proof}

\section{Proofs for Proposition~\ref{pro:approx-wkb-A}, Proposition~\ref{pro:moderate} and Lemma~\ref{lem:Ktransform}}\label{sec:proofss}
We start by expressing the available data in terms of the fundamental solutions of the ordinary differential system \eqref{eq:problemeA}.
Let $u_1$ and $u_2$ be the two solution of
\begin{eqnarray}
\eta^2  \frac{d^2 u_i}{d t^2} + (\lambda^2 +\eta^\alpha q) u_i=0,  &\,& i=1,2, \label{eq:uv}\\
u_1(0)  = 1, \quad \frac{ d u_1}{d t} (0) =0,  \quad u_2(0)  = 0, \quad \eta \frac{ d u_2}{d t} (0) &=& \lambda. \nonumber
\end{eqnarray}
We have the following identity
\begin{prop}\label{pro:formulaT2}. The transmission parameter $T_2$ determined by \eqref{eq:problemeA} satisfies
$$
\frac{2}{T_2} = \ii\frac{\eta}{\lambda}\frac{ d u_1}{d t} (1) + \frac{\eta}{\lambda}\frac{ d u_2}{d t} (1) +   u_1(1) - \ii u_2(1)  .
$$
where $u_1$ and $u_2$ are determined by \eqref{eq:uv}. Furthermore,
$$
\frac{4}{|T_2|^2} = \left(\frac{\eta}{\lambda}\frac{ d u_1}{d t} (1)\right)^2 + \left(\frac{\eta}{\lambda}\frac{ d u_2}{d t} (1)\right)^2 +   \left(u_1(1)\right)^2
+ \left(u_2(1)\right)^2 +2.
$$
\end{prop}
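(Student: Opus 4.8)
The plan is to solve the two-point boundary value problem \eqref{eq:problemeA} explicitly in terms of the fundamental pair $u_1,u_2$ introduced in \eqref{eq:uv}. To avoid a clash of notation I would write the solution of the first order system \eqref{eq:problemeA} as $(Z_1,Z_2)$, reserving $u_1,u_2$ for the fundamental solutions. Eliminating $Z_1$ through the second equation of \eqref{eq:problemeA} shows that $Z_2$ solves the scalar equation \eqref{eq:uv}, while $Z_1=-\ii(\eta/\lambda)\,dZ_2/dt$. Hence $Z_2$ is a linear combination $Z_2=a\,u_1+b\,u_2$, and the whole problem reduces to determining $a,b$ together with the reflection and transmission parameters $R_2,T_2$ of \eqref{eq:problemeA}.

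Next I would fix $a$ and $b$ from the data at $t=0$. Since $u_1(0)=1$, $(du_1/dt)(0)=0$, $u_2(0)=0$ and $\eta\,(du_2/dt)(0)=\lambda$, the conditions $Z_2(0)=1+R_2$ and $Z_1(0)=1-R_2$ yield at once $a=1+R_2$ and $b=\ii(1-R_2)$. Substituting back into $Z_2$ and into $Z_1=-\ii(\eta/\lambda)\,dZ_2/dt$, and then imposing the endpoint conditions $Z_1(1)=Z_2(1)=T_2$, produces two linear equations relating $R_2$ and $T_2$, from which $R_2$ can be eliminated.

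The key simplification comes from the Wronskian. Because \eqref{eq:uv} carries no first order term, the Wronskian $W=u_1\,(du_2/dt)-(du_1/dt)\,u_2$ is constant, and evaluating it at $t=0$ gives $W\equiv\lambda/\eta$. When the two endpoint equations are combined and the $(1+R_2)$ and $(1-R_2)$ contributions are grouped, the numerator of $T_2$ collapses exactly to $(\eta/\lambda)W=1$, leaving
\[
\frac{2}{T_2} = \ii\frac{\eta}{\lambda}\frac{du_1}{dt}(1) + \frac{\eta}{\lambda}\frac{du_2}{dt}(1) + u_1(1) - \ii\,u_2(1).
\]
I expect this algebraic cancellation to be the only delicate point: recognising the Wronskian combination is what turns a messy quotient into this clean expression.

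Finally, for the modulus formula I would use that, for real $\lambda$, $\eta$ and $q$, equation \eqref{eq:uv} has real coefficients and real initial data, so $u_1,u_2$ and their derivatives are real at $t=1$. The real and imaginary parts of $2/T_2$ can then be read off directly from the formula above, and $4/|T_2|^2=|2/T_2|^2$. Expanding the square yields the four announced squared terms together with a cross term $2(\eta/\lambda)\bigl(u_1(1)\,(du_2/dt)(1)-(du_1/dt)(1)\,u_2(1)\bigr)$, which is once more $2(\eta/\lambda)W=2$ by the Wronskian, giving the stated identity.
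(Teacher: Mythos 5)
Your proposal is correct and follows essentially the same route as the paper's proof in the appendix: reduce the first-order system to the scalar second-order equation \eqref{eq:uv}, expand the solution in the fundamental pair, fix the coefficients $1+R_2$ and $\ii(1-R_2)$ from the data at $t=0$, and use the constancy of the Wronskian both to invert the endpoint system (yielding $2/T_2$) and to evaluate the cross term in $|2/T_2|^2$. The only difference is cosmetic: the paper writes the argument for a general system $u'=\ii f v$, $v'=\ii g u$ so that it also covers Proposition~\ref{pro:nonu-orth-2}, whereas you specialize directly to \eqref{eq:problemeA}.
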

This is a simple calculation performed in \ref{Annex:B}.

\begin{proof}[Proof of Proposition~\ref{pro:approx-wkb-A}]
We introduce a perturbed problem closely related to the original problem when
$q$ is smooth.
We note $C$ given by \eqref{eq:def-ABC} is well defined and bounded provided $\eta$ is small enough, namely when $1+\frac{\eta^2}{\lambda^2} B$
is bounded below by a positive constant. Assume that $\eta\leq\eta_0$, with $\eta_0$ chosen so that $C$ is bounded.
A tedious but straightforward computation shows that $v_1$ and $v_2$ are solutions of
\begin{eqnarray}
\eta^2  \frac{d^2 v_i}{d t^2} + (\lambda^2 +\eta^\alpha \tilde{q}) v_i=0, &\,& i=1,2,\label{eq:uv-tilde}\\
v_1(0)  = 1, \quad \frac{ d v_1}{d t} (0) =0,  \quad v_2(0)  = 0, \quad \eta \frac{ d v_2}{d t} (0) &=& \lambda \nonumber,
\end{eqnarray}
with
\begin{equation}\label{eq:error-qqt}
\|\tilde{q}-q\|_{C^{0}([0,1])} \leq K\frac{\eta^{4}}{\lambda^{4}},
\end{equation}
where $K$ depends on  $\|q\|_{C^{4}([0,1])}$.
Introducing the function $G(x,y)\in C^{4}([0,1]\times[0,1])$ given by
\begin{eqnarray*}
G(x,y) &=&  \left(v_2(x) v_1(y) - v_1(x) v_2(y)\right)\left(\tilde{q}(y) - q(y) \right), \\
       &=& \sqrt{\frac{1}{C(\lambda,x)C(\lambda,y)}}\sin \left(\frac{\lambda}{\eta} \int_y^x C(\lambda,s) ds \right)\left(\tilde{q}(y) - q(y) \right),
\end{eqnarray*}
we find, comparing \eqref{eq:uv} and \eqref{eq:uv-tilde} and using Duhamel's Formula, that for $i=1,2$ and $t\in[0,1]$ there holds
$$
u_i(t) - v_i(t) - \frac{\eta^{1+\alpha}}{\lambda}\int_0^{1} G(t,y)  \left(u_i(y) -v_i(y) \right) dy=
\frac{\eta^{1+\alpha}}{\lambda}\int_0^{1} G(t,y) v_i(y)  dy.
$$
And from the bound \eqref{eq:error-qqt} we deduce that
$$
\frac{\eta}{\lambda}\left\|\frac{du_i }{d t} -\frac{ d v_i}{d t}\right\|_{C^{0}([0,1])} +\|u_i -v_i\|_{C^{0}([0,1])} \leq K  \frac{\eta^{5+\alpha}}{\lambda^{5}}.
$$
Proposition~\ref{pro:formulaT2} then shows that
$$
\frac{2}{T} = \ii\frac{\eta}{\lambda}\frac{ d v_1}{d t} (1) + \frac{\eta}{\lambda}\frac{ d v_2}{d t} (1) +   v_1(1) - \ii v_2(1)
+ \mathcal{O}\left( \frac{\eta^{5+\alpha}}{\lambda^{5}} \right),
$$
and since $T\approx 1$, this approximation also gives an approximation for $T$ of the same order, which is the precision up to which $T$ is known.
This in turn leads to the formula for $\mathcal{D}_A^\prime$  announced in Proposition~\ref{pro:approx-wkb-A}.
\end{proof}

To prove proposition~\ref{pro:moderate}, we focus on  what can be recovered from $\mathcal{D}_A^\prime$ around the normal incidence.
\begin{prop}\label{pro:modD2}
Suppose that $\frac{1}{2}\leq \lambda \leq 1$, and $\alpha$ is close to one. Then, the parameters $A_i, i=1,\ldots,5$ given by
\begin{eqnarray*}
A_1 &=& q(0)+q(1) + \mathcal{O}(\eta^{2}),\\
A_2 &=& q(0)q(1) + \mathcal{O}(\eta^{2}), \\
A_3 &=& \frac{d \ln q}{dt}(1)-\frac{d \ln q}{dt}(0) + \mathcal{O}(\eta),\\
A_4 &=& \int_0^1 q(s)ds + \mathcal{O}(\eta^{3-\alpha}),\\
A_5 &=& \int_0^1 q^2(s)ds + \mathcal{O}(\eta^{4-2\alpha}).
\end{eqnarray*}
can be extracted from the data $\mathcal{D}_A^\prime$ in this moderate range of incidences.
\end{prop}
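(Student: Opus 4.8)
The plan is to start from the closed WKB expression for $\mathcal{D}_A^\prime$ established in Proposition~\ref{pro:approx-wkb-A} and to read off the five parameters from the way the data depends on $\lambda$ over $\tfrac12\le\lambda\le1$. First I would substitute the explicit formulas for $v_1$, $v_2$ and their derivatives into \eqref{eq:D2}. Writing $\Phi(\lambda)=\frac{\lambda}{\eta}\int_0^1 C(\lambda,s)\,ds$, $C_0=C(\lambda,0)$ and $C_1=C(\lambda,1)$, each of the four squared quantities carries a factor $\cos^2\Phi$ or $\sin^2\Phi$, so after using $\cos^2+\sin^2=1$ and the double-angle identities they reorganize into a ``mean plus oscillation'' form
\[
\mathcal{D}_A^\prime = K_3(\lambda) + K_1(\lambda)\cos\!\big(2\Phi(\lambda)+K_2(\lambda)\big) + \mathcal{O}\!\left(\frac{\eta^{5+\alpha}}{\lambda^{5}}\right),
\]
where, at leading order, $K_3=\tfrac12\big(C_0+C_0^{-1}\big)\big(C_1+C_1^{-1}\big)$ is the slowly varying mean, $K_1$ is the oscillation amplitude, and $K_2=\mathcal{O}(\eta)$ is a phase shift produced by the $\mathcal{O}(\eta)$ correction terms in $v_1,v_2$, in particular by $\tfrac{dC}{dt}$ at the two endpoints.

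Next I would expand $C=(1+\tfrac{\eta^\alpha}{\lambda^2}q)^{1/2}(1+\tfrac{\eta^2}{\lambda^2}B)^{-2}$, which is legitimate on $\tfrac12\le\lambda\le1$ since there $\tfrac{\eta^\alpha}{\lambda^2}$ and $\tfrac{\eta^2}{\lambda^2}$ are genuinely small. Because $C+C^{-1}=2\cosh(\ln C)$ is even in $\ln C\approx\tfrac12\tfrac{\eta^\alpha}{\lambda^2}q$, the mean $K_3-2$ starts at order $\eta^{2\alpha}/\lambda^4$ and encodes $q(0)^2+q(1)^2$, while the amplitude $K_1\approx-\tfrac12\tfrac{\eta^{2\alpha}}{\lambda^4}q(0)q(1)$ encodes the product $q(0)q(1)$; together they yield the (mirror-symmetric) symmetric functions $A_1$ and $A_2$. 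The phase shift $K_2$ isolates the endpoint first derivatives of $q$, giving $A_3$. Finally, expanding $\Phi(\lambda)=\frac{\lambda}{\eta}+\frac{\eta^{\alpha-1}}{2\lambda}\int_0^1 q-\frac{\eta^{2\alpha-1}}{8\lambda^3}\int_0^1 q^2+\cdots$ shows that the $\lambda^{-1}$ and $\lambda^{-3}$ content of the total phase carries $\int_0^1 q$ and $\int_0^1 q^2$, that is $A_4$ and $A_5$ (the large reference term $\lambda/\eta$ being known and subtracted).

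The extraction itself relies on the sampling assumption. Since $\Phi$ varies on the scale $\lambda\sim\eta$ whereas $K_1,K_2,K_3$ vary slowly, and the sampling rate $\eta^2$ is much finer than the oscillation period, I would separate the slowly varying mean and amplitude from the rapid cosine, reading off $K_3$ and $|K_1|$ (the sign of $K_1$ being fixed against the known reference phase), and then fit the unwrapped total phase $2\Phi+K_2$ as a function of $\lambda\in[\tfrac12,1]$ to recover its coefficients of $\lambda^{-1}$ and $\lambda^{-3}$. The $\mathcal{O}(\eta)$ shift $K_2$ does not contaminate $A_4,A_5$, since the moment contributions to the phase are of strictly lower order in $\eta$ than $K_2$ for $\alpha$ near one.

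I expect the main obstacle to be the error bookkeeping rather than any single identity. The informative content of $A_2,A_4,A_5$ sits at the subleading orders $\eta^{2\alpha}$, $\eta^{\alpha-1}$ and $\eta^{2\alpha-1}$, so isolating each moment requires dividing by a small power of $\eta$, which amplifies both the $\mathcal{O}(\eta^{5+\alpha})$ measurement error and the truncation errors of the WKB/Taylor expansions. This is precisely where the second WKB term $B$ matters: it cancels the leading unwanted contributions and is responsible for the stated gains in accuracy, namely $\mathcal{O}(\eta^2)$ for $A_1,A_2$ and $\mathcal{O}(\eta^{3-\alpha})$, $\mathcal{O}(\eta^{4-2\alpha})$ for $A_4,A_5$. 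One must also check that the rapidly oscillating phase can be unwrapped consistently, which is guaranteed by the sampling rate being finer than $\eta$. Carrying these estimates through the nonlinear extraction steps, the square roots needed for the symmetric functions and the division by $\eta^{2\alpha-1}$ for $A_5$, so as to obtain exactly the claimed orders, is the technical heart of the argument.
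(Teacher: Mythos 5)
Your proposal follows the same skeleton as the paper's proof: start from the WKB formula \eqref{eq:D2} of Proposition~\ref{pro:approx-wkb-A}, Taylor-expand in $\eta^{\alpha}/\lambda^{2}$, organize the data as a slowly varying mean plus an amplitude times a fast oscillation, read $q(0)^{2}+q(1)^{2}$ off the mean, $q(0)q(1)$ off the amplitude, the endpoint derivatives off the quadrature component, and the moments $\int_0^1 q$, $\int_0^1 q^{2}$ off the $\lambda^{-1}$ and $\lambda^{-3}$ content of the phase, the whole extraction resting on the $\eta^{2}$ sampling rate. Your leading-order identifications (the factored forms of $K_{3}$ and $K_{1}$, and the expansion of $\Phi$) agree with the paper's expansion of $\mathcal{D}_A^{\prime}$. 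The genuine gap sits exactly at the step you call ``the technical heart'', and it is not merely technical.

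By folding the sine (quadrature) term into a phase shift $K_{2}$, you make $A_{3}$ and $A_{4}$ structurally entangled: the unwrapped total phase is
\begin{equation*}
2\Phi(\lambda)+K_{2}(\lambda)=\frac{2\lambda}{\eta}
+\left(\eta^{\alpha-1}A_{4}\pm\frac{\eta}{2}A_{3}\right)\frac{1}{\lambda}
-\frac{\eta^{2\alpha-1}}{4}\,A_{5}\,\frac{1}{\lambda^{3}}+\cdots,
\end{equation*}
so $K_{2}$ and the $A_{4}$ term carry \emph{exactly the same} $\lambda^{-1}$ profile. Your justification --- that the moment contributions are of strictly lower order in $\eta$ than $K_{2}$ --- compares the wrong quantities: what matters is not the size of $K_{2}$ relative to the moment terms, but relative to the accuracy $\mathcal{O}(\eta^{2})$ to which the phase coefficients must be known, and there $K_{2}=\mathcal{O}(\eta)$ is far above threshold. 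Concretely, fitting the $\lambda^{-1}$ coefficient while discarding $K_{2}$ returns $A_{4}\pm\frac{1}{2}\eta^{2-\alpha}A_{3}$, an error $\mathcal{O}(\eta^{2-\alpha})$, one full power of $\eta$ worse than the claimed $\mathcal{O}(\eta^{3-\alpha})$; and since in your formulation $A_{3}$ appears nowhere except inside $K_{2}$, your two statements ``$A_{3}$ is read off from $K_{2}$'' and ``$K_{2}$ can be neglected in the phase fit'' are inconsistent, leaving $A_{3}$ undetermined. (Only $A_{5}$ is safe, because $K_{2}$ has no $\lambda^{-3}$ component --- a linear-independence argument, not an order argument.) The paper avoids this trap by never merging the terms: in its formula for $\tilde{\mathcal{D}}_2$ the common phase contains only $A_{4},A_{5}$, while $A_{3}$ enters through the coefficient $A_{2}A_{3}\frac{\eta}{\lambda}$ of $\sin(\cdot)$, in quadrature with the main cosine of amplitude $\approx-2A_{2}$, and the five parameters are fitted jointly to that explicit model; it is this quadrature structure that makes $A_{3}$ and $A_{4}$ simultaneously identifiable at the stated accuracies. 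To repair your argument you must retain the cosine/sine decomposition (or fit phase and both quadrature amplitudes together) and then redo the error bookkeeping; note also that your own (correct) expansion of $\Phi$ gives the $A_{5}$ term the prefactor $\eta^{2\alpha-1}$, from which the same bookkeeping yields $\mathcal{O}(\eta^{3-2\alpha})$, not the stated $\mathcal{O}(\eta^{4-2\alpha})$, so the claimed orders cannot simply be asserted to follow from your formulas.
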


\begin{proof}
A Taylor expansion of $\mathcal{D}_A^\prime$ given \eqref{eq:D2} around $\eta=0$, using the fact that $\alpha\geq1$ shows that
\begin{eqnarray*}
\mathcal{D}_A^\prime(\lambda) &=& 2+  \frac{\eta^{2\alpha}}{4\lambda^4}\left(q(1)^2+q(0)^2 -\left(q(1)^3+q(0)^3 \right)\frac{\eta^{\alpha}}{\lambda^2}\right) \\
&+& q(0)q(1)\frac{\eta^{2\alpha}}{4\lambda^4} \left( (q(0)+q(1))\frac{\eta^{\alpha}}{\lambda^2} -2\right)\cos(2\frac{\lambda}{\eta} +\phi(\lambda)) \\
&+& \left(\frac{d q}{dt}(1) q(0) - \frac{dq}{dt}(0)q(1) \right)\frac{\eta^{2\alpha+1}}{\lambda^5}\sin(2\frac{\lambda}{\eta} +\phi(\lambda)) + \mathcal{O}(\eta^{2+2\alpha}),
\end{eqnarray*}
where
$$
\phi(\lambda) = 2\frac{\lambda}{\eta}(\int_0^{1} C(\lambda,s)ds -1 ).
$$
Using the formula for $C(\lambda,s)$ given by \eqref{eq:def-ABC} we find that for $\lambda\in[1-\pi N\eta,1]$, for a given $N$, we have
$$
\phi(\lambda) - \phi(1) = \frac{1-\lambda}{\lambda}\eta^{\alpha-1}\int_0^{1}q(s)ds + \frac{\lambda^3-1}{4\lambda^3} \eta^{2\alpha-2}\int_0^{1}q^2(s)ds +\mathcal{O}(\eta^2).
$$
Without changing the order of approximation, since $\eta$ and $\alpha$ are known parameters, the available data thus becomes
\begin{eqnarray}
\tilde{\mathcal{D}}_2(\lambda) &=& \frac{4\lambda^4}{\eta^{2\alpha}} \left({\mathcal{D}}_2(\lambda) -2\right) \nonumber\\
&=& \left(A_1^2 -2A_2 -A_1\left(A_1^2 -3 A_2 \right)\frac{\eta^{\alpha}}{\lambda^2}\right) \nonumber\\
&+& A_2 \left( A_1 \frac{\eta^{\alpha}}{\lambda^2} -2\right)
\cos\left(2\frac{\lambda}{\eta} + \frac{1-\lambda}{\lambda}\eta^{\alpha-1} A_4 + \frac{\lambda^3-1}{4\lambda^3} \eta^{2\alpha-2}A_5\right) \nonumber \\
&+& A_2 A_3\frac{\eta}{\lambda}\sin\left(2\frac{\lambda}{\eta} + \frac{1-\lambda}{\lambda}\eta^{\alpha-1} A_4 + \frac{\lambda^3-1}{4\lambda^3} \eta^{2\alpha-2}A_5\right)
+ \mathcal{O}(\eta^{2})\nonumber,
\end{eqnarray}
The conclusion follows, as we have $\mathcal{O}(\eta^{-2})$ data points available for $\tilde{\mathcal{D}}_2$.  We could recover these five parameters
 either by a least square fit, or by deriving explicit formulas.
\end{proof}

Finally, we prove lemma~\ref{lem:Ktransform}.
\begin{proof}[Proof of lemma~\ref{lem:Ktransform}]
Since  $|\eta^{\alpha}q(t) /\lambda^2|<\tau<1$ for  every $t \in [0,1]$, we can write using the binomial formula
$$
\mathcal{D}_B[q](\lambda) := \sum_{n=0}^{\infty} (-1)^n \frac{\Gamma\left(n+\frac{1}{2}\right)}{\sqrt{\pi} n !} \frac{\eta^{ \alpha\, n}}{\lambda^{2n}} \int_0^1 (q(s))^n ds
$$
Introducing the holomorphic function $\hat{\mathcal{D}}_3q$ of the complex variable $\zeta$ on the annulus $\sqrt{\eta^{\alpha}\tau}<|\zeta|<1$ given by
$$
\hat{\mathcal{D}}_B [q](\zeta) := \sum_{n=0}^{\infty} (- \eta^{ \alpha})^n \frac{\Gamma\left(n+\frac{1}{2}\right)}{\sqrt{\pi} n !} \zeta^{-2n}  \int_0^1 (q(s))^n ds,
$$
we observe that this quantity is well defined, as the series is absolutely convergent for $|\zeta|>\sqrt{\eta^{\alpha}}$.
As $\lambda$ varies in $[\sqrt{\tau\eta^{\alpha}},1]$, $\mathcal{D}_B [q](\lambda)$ determines $\hat{\mathcal{D}}_B [q]$ on a non-empty real interval
and therefore also on the full annulus. We may therefore define $\hat{\mathcal{D}}_B [q]$ as the
holomorphic extension of $\mathcal{D}_B [q](\lambda)$.

We compute, using the residue formula on the ring of radius $\sqrt{\tau\eta^{\alpha}}<\rho<1$, that for any $t\in\mathbb{R}$,
$$
\frac{(-\ii \eta^{\alpha} t)^n}{n !}\int_{0}^{1} (q(s))^n ds
= \frac{1}{2\sqrt{\pi}} \frac{( \ii \rho^2 t)^n}{\Gamma\left(n+\frac{1}{2}\right)} \int_{0}^{2\pi}  \hat{\mathcal{D}}_B [q]\left(\rho e^{\ii s}\right)e^{2 \ii n s} ds.
$$
This in turn shows, using the fact that all sums are well behaved,  that
\begin{eqnarray*}
\int_0^1 \exp \left( -\ii \eta^{\alpha} t q(s)\right) ds &=& \sum_{n=0}^{\infty} \frac{(-\ii \eta^{\alpha} t)^n}{n !}\int_{0}^{1} (q(s))^n ds \\
&=& \frac{1}{2\sqrt{\pi}} \sum_{n=0}^\infty
\frac{(\ii \rho^2 t)^n}{\Gamma\left(n+\frac{1}{2}\right)} \int_{0}^{2\pi}  \hat{\mathcal{D}}_B [q] \left(\rho e^{\ii s}\right)e^{2 \ii  n s} ds,\\
&=& \frac{1}{2\pi}\int_{0}^{2\pi}  \hat{\mathcal{D}}_B [q] \left(\rho e^{\ii s}\right)  \hat{\mathcal{K}}\left(  \ii \rho^2 t   e^{2 \ii  s}\right) ds,
\end{eqnarray*}
where
$$
\hat{\mathcal{K}}(\zeta) =  \sqrt{\pi} \sum_{n=0}^\infty  \frac{\zeta^n}{\Gamma\left(n+\frac{1}{2}\right)}.
$$
It turns out that $\hat{\mathcal{K}}$ has a closed form expression on the positive real line, in terms of the Error Function. For any $x>0$,
$$
\mathcal{K}(x) = 1 + \sqrt{\pi x} e^{x} \mbox{erf}{\sqrt{x}} = 1 + 2\sqrt{x} e^{x} \int_0^{\sqrt{x}} e^{-t^2} dt.
$$
The converse transformation, using the same steps backwards, gives $\mathcal{D}_B [q]$ in terms of $\mathcal{D}_B^\prime [q]$.  \end{proof}
\section{Conclusion}\label{sec:concl}
We have analyzed the polarimetric measurement with variable incident angle experiment described in 
Lionheart \& Newton~\cite{LIONHEART-NEWTON-07}. After a detailed inspection of the various scales involved in 
the problem, we produced two related one-dimensional reconstruction problems, 
Problem~A given by \eqref{eq:problemeA} and Problem~B given by \eqref{eq:defDB}. We argued that these problems are relevant
for the two Liquid Crystal Cell configurations we considered, orthorhombic and nematic uniaxial. Both problems have a simple 
formulation, and can be studied by applied analysts with no prior exposure to Liquid Crystals.

We partially addressed Problem~A, under a smoothness assumption. Further investigation of this problem would be of practical
and theoretical interest. In the smooth case, it would be interesting to see how the boundary data can be recovered in an optimal manner
numerically. The non-smooth case is left open. While we do not believe that much more can be done with very general parameters, it is very
possible the ill-posedness of this problem is dramatically reduced when additional constraints are imposed, e.g. when the medium
is piecewise linear.

We addressed Problem~B, by characterising a class of equivalent parameters, and by providing a uniqueness result when the coefficient 
is strictly increasing and $C^1$. In that case, we show that the reconstruction amounts to of the inversion of the Laplace Transform 
of the  Hankel Transform of a target function. We have not attempted to address the numerical resolution of such a notoriously 
ill-posed problem: it is very possible that it is manageable when restricted to a particular class of functions.

\section*{Acknowledgements}
The authors were supported by  EPSRC  Science and Innovation award to the Oxford Centre for Nonlinear PDE (EP/E035027/1). 
The second author was also supported by EPSRC Grant EP/E010288/1 and by NSFC grant 11261054.

\appendix
\section{Proof of Proposition~\ref{pro:well-posed-ini}\label{sec:appendixA}}
\begin{prop} \cite{KISELEV-07}
For any two solutions $Z_1$ and $Z_2$ of \eqref{eq:trans-2},  $
\bar{Z}^{T}_1DZ_2$ is independent of $t$, where
$D=\left[\begin{array}{cccc}
{0}&{1}&{0}&{0}\\
{1}&{0}&{0}&{0}\\
{0}&{0}&{0}&{1}\\
{0}&{0}&{1}&{0}
\end{array}\right]$.
\label{annex:prop1}
\end{prop}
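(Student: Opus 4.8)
The plan is to treat $\Phi(t) := \bar{Z}_1^{T}(t)\, D\, Z_2(t)$ as a candidate conserved quantity and to show directly that $\frac{d\Phi}{dt}=0$ wherever $B$ is continuous; the continuity of $Z_1$ and $Z_2$ on $[0,1]$ then upgrades this to $\Phi$ being constant across the finitely many jumps of $B$ as well. Since the statement concerns \emph{arbitrary} solutions of the differential equation in \eqref{eq:trans-2}, the boundary data play no role and only the linear flow matters. Differentiating and substituting $\frac{dZ_j}{dt}=\frac{\ii}{\eta}B Z_j$, and using that $\eta$, $\lambda=\cos\theta$ and $\sin\theta$ are real (so that $\overline{\ii/\eta}=-\ii/\eta$), I would obtain
\[
\frac{d\Phi}{dt}
= \Bigl(\overline{\tfrac{\ii}{\eta}B Z_1}\Bigr)^{T} D Z_2 + \bar{Z}_1^{T} D\,\tfrac{\ii}{\eta}B Z_2
= \frac{\ii}{\eta}\,\bar{Z}_1^{T}\bigl(DB-\bar B^{T}D\bigr)Z_2 .
\]
Thus the whole statement collapses to the algebraic identity $DB=\bar B^{T}D$, equivalently to the assertion that $DB$ is Hermitian.

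The substantive step --- really the only one --- is to verify this identity from the explicit entries of $B$ in \eqref{eq:def-B}. Since $D$ realises the involution $\sigma=(1\,2)(3\,4)$, left multiplication by $D$ interchanges rows $1\leftrightarrow 2$ and $3\leftrightarrow 4$, so that $(DB)_{ij}=B_{\sigma(i),j}$, and symmetry of $DB$ amounts to the coincidences $B_{11}=B_{22}$, $B_{33}=B_{44}=0$, $B_{24}=B_{31}$ and $B_{14}=B_{32}$, together with the vanishing of the remaining cross entries $B_{13}=B_{23}=B_{41}=B_{42}=0$. Each of these is immediate by inspection of \eqref{eq:def-B}: they are exactly the off-diagonal matchings built into the Berreman matrix for a reciprocal medium. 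Because the dielectric tensor $\tilde\varepsilon$ is real, $B$ is real, hence $\bar B^{T}=B^{T}$, and the symmetry of $DB$ is precisely $DB=B^{T}D=\bar B^{T}D$.

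I do not anticipate a genuine obstacle here: the differentiation is routine and the entire content is the pointwise symmetry of $DB$, which follows at once from the block structure of \eqref{eq:def-B}. The only two points deserving care are that $B$ is merely piecewise continuous --- so $\frac{d\Phi}{dt}=0$ is first established on each interval of continuity and then extended to all of $[0,1]$ by the continuity of $\Phi$ --- and that the computation uses the reality of the dielectric parameters (giving $\bar B=B$), which is the physical lossless-medium assumption underlying \eqref{eq:def-B}.
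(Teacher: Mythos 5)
Your proof is correct and takes essentially the same route as the paper's: differentiate $\bar{Z}_1^{T}DZ_2$, substitute the ODE, use that $\eta$ and $B$ are real, and reduce everything to the algebraic identity $DB=B^{T}D$, i.e.\ the symmetry of $DB$. The paper merely asserts this identity, so your explicit entry-by-entry verification from \eqref{eq:def-B}, together with your remarks on the reality of $B$ and on handling the jump points of the piecewise continuous $B$, are careful refinements of the same argument rather than a different approach.
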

\begin{proof}
We compute that
\begin{eqnarray*}
\frac{d}{dt}\left(\bar{Z}_1^TDZ_2\right) &=
\frac{d}{dt}(\bar{Z}_1^T)DZ_2+\bar{Z}_1^TD\frac{d}{dt}(Z_2)\\\
&=-\frac{i}{\epsilon}\bar{Z}_1^TB^TDZ_2+\bar{Z}_1^TD\frac{i}{\epsilon}B Z_2\\
&=\ii\frac{i}{\epsilon}\bar{Z}_1^T(DB-B^TD)Z_2 ,
\end{eqnarray*}
and since $DB=B^TD$, we obtain that $\bar{Z}_1^TDZ_2$ does not depend on $t$.
\end{proof}

We now prove Proposition~\ref{pro:well-posed-ini}.

\begin{proof}[Proof of Proposition~\ref{pro:well-posed-ini}]
Since the entries of the matrix $B$ is are piecewise continuous functions,
global existence and uniqueness of the fundamental solution of the system is well-known see e.g. \cite{ARNOLD-06}.
Now we prove the existence and uniqueness of solutions to the transmission problem via the fundamental solution.
Let us define the fundamental solution of equation \eqref{eq:trans-2} as follows
\[
\Phi(t)=(\phi_1, \phi_2, \phi_3, \phi_4),
\]
where
$\phi_{j}=\left[{\phi_{1j}},{\phi_{2j}},{\phi_{3j}},{\phi_{4j}}\right]^T$
, $j=1,2,3,4$ with $\Phi(0)={\bf{\rm
Id}}.$\\

We have $\Phi(1)\tilde{Z}(0)=\tilde{Z}(1)$ therefore

\begin{equation}
A_1(1)\left[\begin{array}{c}
{R_1}\\{R_2}\\{T_1}\\{T_2}\end{array}\right]=A_2(1)\left[\begin{array}{c}
{0}\\{0}\\{I_1}\\{I_2}\end{array}\right] \label{A1A2}
\end{equation}
where
\[
A_1=\left[\begin{array}{cccc}
{\phi_{12}-\phi_{11}}&{\phi_{14}-\phi_{13}}&{-1}&{0}\\
{\phi_{22}-\phi_{21}}&{\phi_{24}-\phi_{23}}&{-1}&{0}\\
{\phi_{32}-\phi_{31}}&{\phi_{34}-\phi_{33}}&{0}&{-1}\\
{\phi_{42}-\phi_{41}}&{\phi_{44}-\phi_{43}}&{0}&{-1}\end{array}\right]
\]
and \[ A_2=\left[\begin{array}{cccc}
{-1}&{0}&{-(\phi_{11}+\phi_{12})}&{-(\phi_{13}+\phi_{14})}\\
{1}&{0}&{-(\phi_{21}+\phi_{22})}&{-(\phi_{23}+\phi_{24})}\\
{0}&{-1}&{-(\phi_{31}+\phi_{32})}&{-(\phi_{33}+\phi_{34})}\\
{0}&{1}&{-(\phi_{41}+\phi_{42})}&{-(\phi_{43}+\phi_{44})}\end{array}\right].
\]
To establish the announced existence and uniqueness property, let us now show that for any $t\in[0,1]$, $|{\rm det} A_1| \geq 2$.
Let
$U=\left[\begin{array}{c}{U_1}\\{U_2}\\{U_3}\\{U_4}\end{array}\right]$
and
$V=\left[\begin{array}{c}{V_1}\\{V_2}\\{V_3}\\{V_4}\end{array}\right]$
be given by $U_i=\phi_{i2}-\phi_{i1}$ and $V_i=\phi_{i4}-\phi_{i3}$.

Then the determinant of $A_1$ satisfies
\[
{\rm det}(A_1) =\left| \begin{array}{cc} {U_1-U_2}&{V_1-V_2}\\
{U_3-U_4}&{V_3-V_4} \end{array}\right|.
\]
Set
$C_1=\left[\begin{array}{c}  {U_1-U_2}\\{U_3-U_4}\end{array}\right]
$ and $C_2=\left[\begin{array}{c}
{V_1-V_2}\\{V_3-V_4}\end{array}\right]$.
We compute that
\[
   C_1 \cdot
   \bar{C}_2=(U_1\bar{V_1}+U_2\bar{V_2}-U_1\bar{V_2}-U_2\bar{V_1})+(U_3\bar{V_3}+U_4\bar{V_4}-U_3\bar{V_4}-U_4\bar{V_3}).
\]
From the identity $\Phi(0)={\bf {\rm Id}}$ , we deduce that $\bar{V}^T(0)DU(0)=0$,
Since $U$ and $V$ are two solutions of \eqref{eq:trans-2}, Proposition~\ref{annex:prop1} shows that this implies
 $\bar{V}^T D U =0$ for all $t\in[0,1]$.  Expressing this identity in terms of the components of $U$ and $V$, we obtain
\[
 (U_1 \bar{V}_2+U_2 \bar{V}_1)+(U_3 \bar{V}_4+U_4 \bar{V}_3)=0,
\]
and in turn
\begin{equation}
C_1 \cdot
\bar{C}_2= U_1\bar{V_1}+U_2\bar{V_2}+ U_3\bar{V_3}+U_4\bar{V_4} = U \cdot \bar{V}.
\label{c1c1}
\end{equation}
Cauchy-Schwarz inequality thus shows that
 \[ C_1 \cdot \bar{C}_2=\lambda N(U)N(V), \]
with $\lambda \in \mathbb{C}$ and $|\lambda|\leq1$. Similarly, starting from  the identity $\bar{U}^T(0)DU(0)=-2$ and $\bar{V}^T(0)DV(0)=-2$ we obtain $C_1 \cdot \bar{C}_1=N(U)^2+2$ and $C_2
\cdot \bar{C}_2=N(V)^2+2$. Therefore
\begin{eqnarray*}
|{\rm det}(A_1)|^2 &={\rm det}(\bar{A_1}^{T}){\rm det}(A_1)=\left|
\begin{array}{cc}
{C_1 \cdot \bar{C}_1}&{C_2 \cdot \bar{C}_1}\\
{C_1 \cdot \bar{C}_2}&{C_2 \cdot \bar{C}_2}\end{array}\right| \\
&=(1-|\lambda|^2)N(U)^2N(V)^2+4+2N(U)^2+2N(V)^2\\
&\ge 4.
\end{eqnarray*}
\end{proof}
\section{\label{Annex:B}Proof of Proposition~\ref{pro:formulaT2} and Proposition~\ref{pro:nonu-orth-2}}
Problem~\ref{eq:problemeA} can be formulated slightly more generally as follows.
Given an initial condition
\[ u(0)=1+r,v(0)=1-r,\]
find $r\in\mathbb{C}$, $t\in\mathbb{C}$ such that $u(1)=v(1)=t$,
when $u$ and $v$ satisfy\
$$
\frac{d}{d x} u   =  \ii f(x)v,\quad \frac{d}{d x} v    =   \ii g(x)u,
$$
with $f$ and $g$ two positive functions, bounded above and
below by positive constants. This applies to system \eqref{eq:probleme2-0} with $v=z_{3}$, $u=z_{4}$, $\eta f=\lambda$, and $\eta \lambda g = \lambda^2+\varepsilon_{22}-1$.
Note $u$ is  a solution of the real valued elliptic equations
\begin{equation}
\frac{d}{d x}\left(\frac{1}{f}\frac{d}{d x} w\right)+g\, w=0,\label{eq:helm1}
\end{equation}
The general uniqueness results for ordinary differential equations show that
$u$ is a linear combination of the two fundamental solutions
${\rm w}_{1}$ and ${\rm w}_{2}$, of \eqref{eq:helm1} corresponding to the initial
conditions ${\rm w}_{1}(0)=1\, \frac{1}{f(0)}\frac{d}{d x}{\rm w}_{1} (0)=0$ and
${\rm w}_{2}(0)=0,\,\frac{1}{f(0)}\frac{d}{d x} {\rm w}_{2}(0)=1$. \[
u(t)=A{\rm w}_{1}(t)+B{\rm w}_{2}(t).\]
 Note that the
Wronskian of the problem is constant:\[
\frac{1}{f(x)}\left({\rm w}_{1}\frac{d}{d x}{\rm w}_{2}-{\rm w}_{2}\frac{d}{d x} {\rm w}_{1} \right)\equiv 1. \]
Using the initial and final conditions, we obtain
$$
 \left[\begin{array}{c}
A\\
B\end{array}\right] =
 \left[\begin{array}{c}
1+r\\
\ii (1-r)\end{array}\right] \mbox{ and }
\left[\begin{array}{cc}
{\rm w}_{1}(1) & {\rm w}_{2}(1)\\
\frac{1}{f(1)}\frac{d}{d x} {\rm w}_{1}(1) &
\frac{1}{f(1)}\frac{d}{d x} {\rm w}_{2}(1)\end{array}\right] \left[\begin{array}{c}
A\\
B\end{array}\right]=
\left[\begin{array}{c}
t\\
\ii t \end{array}\right]
$$
which leads to the following formulae for $t$ and $r$,
 \[
t=\frac{2}{e_{2}(1)+\ii e_{1}(1)}\mbox{ and } r=\frac{\ii e_{1}(1)-e_{2}(1)}{e_{2}(1)+\ii e_{1}(1)}.
\]
where $e_{i} =\frac{1}{f(x)}\frac{d}{d x} {\rm w}_{i} -\ii {\rm w}_{i}$ for  $i=1,2$. Using the Wronskian identity, we note that $t$
is always well defined, as a simple calculation shows that
\begin{eqnarray*}
\frac{4}{|t|^{2}} & =|e_{1}(1)|^{2}+|e_{2}(1)|^{2}+2\\
 & =\left(\frac{1}{f(1)}\frac{d}{d x} {\rm w}_{2} (1)\right)^{2}
   +\left(\frac{1}{f(1)}\frac{d}{d x} {\rm w}_{1} (1)\right)^{2}
   +\left({\rm w}_{2}(1)\right)^{2}+\left({\rm w}_{1}(1)\right)^{2}
   +2.
\end{eqnarray*}
One can check that $|t|^2+|r|^2=1$, therefore the amplitude of the reflected field, if it was available, would be redundant.

To prove that $t$ is unchanged if $f$ and $g$ are replaced by $\tilde f(x)= f(1-x)$ and $\tilde g(x) = g(1-x)$, note the solution $\tilde u$
corresponding to $\tilde f$ and $\tilde g$ can be written as linear  combination of $w_1(1-x)$ and $w_2(1-x)$.
The associated reflection and transmission coefficients $\tilde r$ and $\tilde t$ satisfy
$$
 \left[\begin{array}{c}
A\\
-B\end{array}\right] =
 \left[\begin{array}{c}
t\\
\ii t
\end{array}\right] \mbox{ and }
\left[\begin{array}{cc}
{\rm w}_{1}(1) & {\rm w}_{2}(1)\\
-\frac{1}{f(1)}\frac{d}{d x} {\rm w}_{1}(1) &
-\frac{1}{f(1)}\frac{d}{d x} {\rm w}_{2}(1)\end{array}\right] \left[\begin{array}{c}
A\\
B\end{array}\right]=
\left[\begin{array}{c}
1+r\\
\ii (1-r) \end{array}\right]
$$
which shows that $t=\tilde t$ and $r=\tilde r$.
\end{document}